\theoremstyle{plain}
\newtheorem{theorem}{Theorem}[section]
\newtheorem{lemma}{Lemma}[section]
\newtheorem{proposition}{Proposition}[section]
\newtheorem{corollary}{Corollary}[section]
\theoremstyle{definition}
\newtheorem{definition}{Definition}[section]
\newtheorem{example}{Example}[section]
\theoremstyle{remark}
\newtheorem{remark}{Remark}[section]
\title{Homotopy equivalence for proper holomorphic mappings}
\author{John P. D'Angelo}
\address{Dept. of Mathematics, Univ. of Illinois, 1409 W. Green St.,
Urbana IL 61801}
\email{jpda@math.uiuc.edu}
\author{Ji\v{r}\'\i\ Lebl}
\address{Dept. of Mathematics, Oklahoma State Univ., Stillwater OK 74078}
\email{lebl@math.okstate.edu}
\begin{document}

\begin{abstract} We introduce several homotopy equivalence relations for proper holomorphic
mappings between balls. We provide examples showing that the degree of a rational proper mapping
between balls (in positive codimension) is not a homotopy invariant. In domain dimension at least $2$,
we prove that the set of homotopy classes of rational proper mappings from a ball to a higher dimensional ball
is finite. By contrast, when the target dimension is at least twice the domain dimension, it is well
known that there are uncountably many spherical
equivalence classes. We generalize this result by proving that an arbitrary 
homotopy of rational maps whose endpoints are spherically
inequivalent must contain uncountably many spherically inequivalent maps.
We introduce Whitney sequences, a precise analogue (in higher dimensions)
of the notion of finite Blaschke product (in one dimension). We
show that terms in a Whitney sequence are homotopic to monomial mappings,
and we establish an additional result about the target dimensions of such homotopies.

\medskip

\noindent
{\bf AMS Classification Numbers}: 32H35, 32H02, 32M99, 32A50, 55P10, 30J10, 14P10.

\medskip

\noindent
{\bf Key Words}: Proper holomorphic mappings, homotopy equivalence, spherical equivalence, unit sphere, CR Geometry, Blaschke product.
\end{abstract}

\maketitle

\section{Introduction}

This paper considers proper holomorphic mappings between balls in
possibly different dimensional complex Euclidean spaces.
Two notions of equivalence (spherical and norm) for such maps have
been extensively used. See for example
[Fa], [F2], [H], [HJ], [L], [D], [D3], [R], [DL2].  The purpose of this paper is
to introduce and investigate
a natural but subtle third notion, homotopy equivalence, which is more
useful for some purposes.
Homotopy equivalence itself has several possible definitions, each of
which is useful in different contexts.

The one-dimensional situation for homotopy equivalence is precise,
beautiful, and easy to describe.
See Proposition 2.1. It is natural to attempt to generalize that
result to higher dimensions.
Theorem 5.1 provides one precise analogue of Proposition 2.1.
Several crucial differences arise, however, which we confront in this paper.

Let ${\bf C}^n$ denote complex Euclidean space and let ${\bf B}_n$
denote the unit ball in ${\bf C}^n$.
A holomorphic map $f:{\bf B}_n \to {\bf B}_N$ is {\it proper} if and
only if, for each compact subset $K$ of the target ball,
the inverse image $f^{-1}(K)$ is compact in the domain ball.

The basic properties of homotopy developed in this paper do not depend on regularity assumptions
of the mappings at the boundary. Definitions 2.1, 2.2, and 2.3 introduce the various notions of homotopy. 
One of the key issues, motivating Definition 2.3, allows a homotopy between maps
whose target dimensions differ.

Proposition 2.4 shows that any pair of proper maps from the same ball
are {\it homotopy equivalent in target dimension $M$} when $M$ is 
sufficiently large.
Placing restrictions on $M$ then fits nicely into the general
philosophy of complexity theory in CR Geometry.
In particular, given proper maps $f$ and $g$ with the same domain
ball, there is a minimal $M$ for which
$f$ and $g$ are homotopy equivalent in target dimension $M$. Computing 
this dimension
for explicit rational maps seems to be difficult.

When the domain dimension is at least $2$,
a proper mapping between balls that is smooth up to the boundary must be, by a well-known theorem of Forstneric [F1], 
a rational mapping. It is therefore important also to consider homotopies 
where all the maps in the family are rational (Definition 2.2).

Example 2.1 is striking; it
 shows that the degree of a family of rational 
proper mappings between balls is {\it not} a homotopy invariant.
Theorem 5.1 provides large classes of homotopic proper rational maps 
(terms of Whitney sequences)
for which the degree need
not be a homotopy invariant and clarifies Example 2.1. The proof of this 
result illuminates
a fundamental distinction between the one-dimensional case and the general case. A finite Blaschke product of degree $d$
is the $d$-th term of a Whitney sequence; in domain dimension at least two, however, the $d$-th term of a Whitney sequence
can be of degree less than $d$. Furthermore,
in the higher dimensional case, there exist rational proper mappings between balls that are not terms of Whitney sequences.

Theorem 3.1 gives a finiteness result:
for $n\ge 2$ and $N$ fixed, the set of homotopy classes of rational
proper maps from ${\bf B}_n$ to ${\bf B}_N$ is finite.
Theorem 3.1 is useful, because, by contrast, the number of distinct
spherical equivalence classes is infinite in general. 
Theorem 3.2 and Corollary 3.1 
decisively illustrate the distinction between homotopy and spherical equivalence. 
Given two rational but spherically inequivalent maps,
a homotopy between them must contain {\it uncountably many} spherically inequivalent maps.
It follows that the four maps of Faran from ${\bf B}_2$ to  ${\bf B}_3$ are not homotopic through rational maps
in target dimension three, although they are homotopic in target dimension 
five.
This particular result provides a new method for establishing that two rational proper maps are homotopically
inequivalent through rational maps.

To further illuminate the situation for rational homotopies, in section 4 
we connect our discussion to the
so-called $X$-variety. The method for computing this variety from 
[D1] enables us to compute
it simultaneously for all the maps in a rational homotopy.

The first author acknowledges support from NSF Grants DMS 1066177 and DMS 
1361001. The second author
acknowledges support from NSF Grant DMS 
1362337.  Both authors acknowledge AIM.
We put the finishing touches on this paper at an AIM workshop in 2014, 
and discussed related ideas at an earlier AIM workshop.

\section{definitions and basic properties of homotopy equivalence}

We will denote the squared  Euclidean norm on ${\bf C}^n$ by $|| \ ||^2$ without
indicating the dimension. A holomorphic map $f:{\bf B}_n \to {\bf B}_N$ is 
{\it proper} if and only if
$||f(z)||^2$ tends to $1$ when $||z||^2$ tends to $1$.
It follows by standard complex analysis that $N\ge n$.
It is also easy to see that the composition of proper mappings between
balls is itself proper.
Let $U(n)$ denote the group of unitary transformation of ${\bf C}^n$.
Such transformations are of course the simplest examples of 
automorphisms of ${\bf B}_n$.
We note that $U(n)$, as a connected Lie group, is path connected.

Proper maps $f$ and $g$ between balls
are {\it spherically equivalent} if there are automorphisms $\phi$ (of
the domain ball) and  $\chi$ (of the target ball)
such that $g = \chi \circ f \circ \phi$.
Proper maps $f$ and $g$ are {\it norm equivalent} if $||f||^2 =
||g||^2$ as functions; this concept
provides an equivalence relation for maps with possibly different
target dimensions. When the target dimensions
are the same, norm equivalence is a special case of spherical
equivalence in which the domain automorphism is the identity
and the target automorphism  is unitary.

Let $f$ be a holomorphic mapping with values in ${\bf C}^N$. Its {\it 
embedding
dimension} is 
the number of linearly independent components of $f$. An 
equivalent
definition is the rank of the function $||f||^2$; in other words, the
smallest possible number
of terms in this squared norm.

At least
three versions of {\it homotopy equivalence} between proper maps are
sensible. In some situations we assume that $f$ and $g$
have the same target dimension, whereas in others we allow
the target dimensions to differ. We also
sometimes wish to demand that the maps in the family have
an additional property, such as rationality.

By Proposition 2.4,
given proper maps $f$ and $g$ with the same domain ball, there is a
minimal $M$ for which
$f$ and $g$ are homotopy equivalent in target dimension $M$.

Consider a continuous function $H:{\bf B}_n \times [0,1]  \to {\bf C}^N$ which is assumed
to be holomorphic in the first variable. We write $H_t$ for the map $z \to H(z,t)$.
Since $H_t:{\bf B}_n \to {\bf C}^N$  is a holomorphic mapping, it can
be written as a power series
$$ H_t(z) = \sum {\bf c_\alpha}(t) z^\alpha \eqno (1) $$
in ${\bf B}_n$. The series, which we have expressed in multi-index notation,
converges uniformly on compact subsets of ${\bf B}_n$. 
It follows (see Proposition 3.2)
that each of these
coefficients ${\bf c_\alpha}$ depends continuously on $t$, and
we  say that $H_t$ is a {\it continuous family}. 
In our homotopy considerations,  
we assume, for each $t$, that $H_t$ is a proper mapping between 
balls. 

We introduce the following definitions of the various homotopy equivalences.

\begin{definition} Let $f,g: {\bf B}_n \to {\bf B}_N$ be proper
holomorphic mappings. Then $f$ and $g$ are {\it homotopic}
if, for each $t \in [0,1]$ there is a proper holomorphic mapping
$H_t:{\bf B}_n \to {\bf B}_N$ such that:
\begin{itemize}
\item $H_0 = f$ and $H_1 = g$.
\item $H_t$ is a continuous family.

\end{itemize}\end{definition}

\begin{definition} Let $f,g: {\bf B}_n \to {\bf B}_N$ be proper
holomorphic mappings. Then $f$ and $g$ are {\it homotopic through
rational maps}
if, for each $t \in [0,1]$ there is a proper holomorphic mapping
$H_t:{\bf B}_n \to {\bf B}_N$ such that:
\begin{itemize}
\item $H_0 = f$ and $H_1 = g$.
\item $H_t$ is a continuous family.
\item Each $H_t$ is a rational mapping.

\end{itemize}\end{definition}

When $n\ge 2$ in Definition 2.2, we can replace the condition of
rationality by demanding that each $H_t$ be smooth
on the closed ball. See Section 4.

The third definition is required for a full understanding. It is
sometimes important to identify a proper map $f$ between balls
with the map $f \oplus 0 = (f,0)$. We write $f \sim (f \oplus 0)$.
These maps are norm-equivalent, but they are not spherically equivalent
because the target dimensions differ.
In this way we take advantage of
the natural injection of a target ball into a ball in higher
dimensions.

\begin{definition} Let $f: {\bf B}_n \to {\bf B}_{N_1}$ and $g: {\bf
B}_n \to {\bf B}_{N_2}$ be proper holomorphic mappings.
Then $f$ and $g$ are {\it homotopic in target dimension $k$}
if, for each $t \in [0,1]$ there is a proper holomorphic mapping
$H_t:{\bf B}_n \to {\bf B}_k$ such that:
\begin{itemize}
\item $H_0 \sim f \oplus 0$ and $H_1 \sim g \oplus 0$.
\item $H_t$ is a continuous family.
\end{itemize}
\end{definition}

It is evident that each of the notions of homotopy equivalence is an
equivalence relation. We briefly motivate Definition 2.3.
Given the target dimensions $N$ and $K$,
and a larger integer $M$,
we have natural injections $j_1: {\bf C}^N \to {\bf C}^M$ and $j_2
:{\bf C}^K \to {\bf C}^M$ each given by
$j(\zeta) = (\zeta, 0)$. The definition asks that the maps $j_1 \circ
f$ and $j_2 \circ g$ be homotopy equivalent.
Consider the simple example $H_t:{\bf B}_1 \to {\bf B}_2$ given by
$$ H_t(z) = (tz, \sqrt{1-t^2} z^2). $$
Then $H_1(z) = (z,0)$ and $H_0(z) = (0,z^2)$. Note that $(z^2,0) = U(0,z^2)$
for some unitary $U$. We would like to say
that $z$ and $z^2$ are homotopic in target dimension $2$,
and hence we naturally identify $z$ with $(z,0)$ and $z^2$ with $(z^2,0)$.

The following
decisive result in one dimension holds:

\begin{proposition} Suppose $f:{\bf B}_1 \to {\bf B}_1$ is proper holomorphic.
Then there is a unique positive integer $m$ such that
$f$ is homotopic in dimension $1$ to the map $z \mapsto z^m$. \end{proposition}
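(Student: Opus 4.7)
My plan is to use the classical structure theorem that any proper holomorphic $f:{\bf B}_1\to{\bf B}_1$ is a finite Blaschke product
\[
f(z)=e^{i\theta}\prod_{j=1}^{m}\frac{z-a_j}{1-\overline{a_j}z},\qquad a_j\in{\bf B}_1,\ m\ge 1,
\]
where $m$ is the classical degree, equal to $\#f^{-1}(w)$ for any $w\in{\bf B}_1$ counted with multiplicity. The proposition then splits into producing a homotopy from such a product to $z^m$ and showing that the integer $m$ is a homotopy invariant.

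For existence I would shrink the zeros to the origin and unwind the phase simultaneously via
\[
H_t(z)=e^{i(1-t)\theta}\prod_{j=1}^{m}\frac{z-(1-t)a_j}{1-(1-t)\overline{a_j}z},\qquad t\in[0,1].
\]
Since $|(1-t)a_j|<1$ for every $t\in[0,1]$, each factor is itself a Blaschke factor, so $H_t$ is a Blaschke product of degree $m$, hence proper. The Taylor coefficients of $H_t$ in $z$ are polynomials in $t$, so $\{H_t\}$ is a continuous family in the sense of the paper; with $H_0=f$ and $H_1(z)=z^m$ this realises $m=\deg f$.

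The harder half is uniqueness, which I reduce to showing that the degree is invariant along any continuous family of proper self-maps of ${\bf B}_1$. The natural tool is the argument principle: for a regular value $w\in{\bf B}_1$ and any $r<1$ on which $H_t-w$ is nonvanishing,
\[
\#\bigl(H_t^{-1}(w)\cap\{|z|<r\}\bigr)=\frac{1}{2\pi i}\oint_{|z|=r}\frac{H_t'(z)}{H_t(z)-w}\,dz
\]
is integer-valued and locally constant in $t$. The main obstacle is choosing $r$ close enough to $1$ uniformly in $t$ so that this integral actually computes $\deg(H_t)$ throughout the homotopy; equivalently, one must prevent preimages of $w$ from escaping to $\partial{\bf B}_1$ as $t$ varies, since the hypothesis supplies only pointwise properness. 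I would attack this by combining the Blaschke factorisation of each $H_t$ with the continuity of the Taylor coefficients (Proposition 3.2) via a normal-families argument to obtain a uniform annular lower bound on $|H_t(z)-w|$, which would yield constancy of the degree and force $m=k$ whenever $z^m$ and $z^k$ are homotopic in dimension one.
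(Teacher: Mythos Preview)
Your existence half is exactly the paper's: both deform the Blaschke data via $(1-t)a_j$ and $(1-t)\theta$. For uniqueness the paper also invokes the winding number, but in one line: it integrates $H_t'/H_t$ over $|z|=1$ and asserts continuity in $t$, then says ``an integer-valued continuous function is locally constant.'' You have correctly isolated the difficulty this elides --- Definition~2.1 only demands continuity of $H$ on the \emph{open} disk, so nothing in the hypotheses prevents a zero of $H_t$ from drifting to $\partial{\bf B}_1$ as $t$ varies.

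That escape actually occurs, and it defeats both the paper's argument and your proposed normal-families patch. Take
\[
H(z,t)=\frac{-z\,(z-t)}{1-tz}\qquad\text{on }{\bf B}_1\times[0,1].
\]
Since $|tz|<1$ throughout, the denominator never vanishes and $H$ is continuous; for $t<1$ each $H_t$ is a Blaschke product of degree $2$ with zeros at $0$ and $t$, while $H_1(z)=z$ has degree $1$. For any fixed $r<1$ the zero at $z=t$ lies in the annulus $r\le|z|<1$ once $t>r$, so the ``uniform annular lower bound on $|H_t(z)-w|$'' you hope for cannot exist for $w=0$, and the paper's boundary integral jumps from $2$ to $1$ at $t=1$. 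The uniqueness assertion thus needs an extra uniformity hypothesis (e.g.\ joint continuity of $H$ on $\overline{{\bf B}_1}\times[0,1]$, or equivalently that no Blaschke zero approaches the circle along the homotopy); under any such hypothesis your argument-principle computation on a fixed circle $|z|=r<1$ and the paper's on $|z|=1$ coincide and go through immediately.
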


\begin{proof} It is well-known that each proper holomorphic self-map
of the unit disk is a finite Blaschke product:
$$ f(z) = e^{i \theta} \prod_{j=1}^m {z - a_j \over 1 - {\overline
a_j} z}. \eqno (2) $$
For each $j$, the point  $a_j$ in (2) satisfies $|a_j| < 1$.
These points need not be distinct. For $t \in [0,1]$, define $H_t$ by
replacing $\theta$ in (2) with $(1-t)\theta$
and each $a_j$ in (2)  with $(1-t) a_j$.
Then each $H_t$ is proper,  $H_0 = f$ and $H_1 = z^m$. The continuity in $t$ is evident.
Hence $f$ is homotopic to $z^m$, where $m$ is the {\it degree} of the
Blaschke product.
Next we note the uniqueness. The maps $z^m$ and $z^d$ cannot be
homotopic if $m \ne d$ because
we can recover the exponent $m$ by a line integral:
$$ m = {1 \over 2 \pi i} \int_{|z|=1} {f'(z) \over f(z)} dz. $$
As usual, an integer-valued continuous function is locally constant. \end{proof}

The number $m$ is the degree of the rational function $f$;
it is also the degree of the divisor defining the zero-set of $f$.

Perhaps the most surprising result of this paper is that the degree
of a proper rational mapping between balls is {\bf not invariant} under 
homotopy. The following example illustrates this point and suggests
ideas from the last section of the paper.

\begin{example} We define proper polynomial maps $f,g$ from ${\bf B}_2$
to ${\bf B}_5$. Both these maps have embedding dimension $5$.
These maps are of different degree but they are homotopic in
target dimension $5$.

$$ f(z,w) = (z,zw,zw^2,zw^3,w^4). $$
$$ g(z,w) = (-w^2,zw,-zw^2,z^2w,z^2). $$
Since each of $f$ and $g$ is a monomial map with five distinct monomials,
the embedding dimension in each case is $5$. We check that they are 
endpoints of a one-parameter family of proper maps.

First define a proper map $h: {\bf B}_2 \to {\bf B}_3$ 
by $h(z,w) = (z,zw,w^2)$. Next define a unitary matrix $U$ on ${\bf C}^3$ 
by
$$ U= \begin{pmatrix} {\rm cos}(\theta) & 0 & -{\rm sin}(\theta) \cr 0
& 1 & 0 \cr {\rm sin}(\theta) & 0 & {\rm cos}(\theta) \end{pmatrix}.
$$ 
Finally let $W: {\bf B}_3 \to {\bf B}_5$ be the Whitney map defined by
$$ \zeta \to (\zeta_1,\zeta_2,\zeta_1 \zeta_3, \zeta_2 \zeta_3, 
\zeta_3^2). $$
Put $t={\rm cos}(\theta)$. Define $H_t$ by $H_t = W \circ U \circ h$;
then $H_t:{\bf B}_2 \to {\bf B}_5$. Since the composition of proper
maps is proper, each $H_t:{\bf B}_2 \to {\bf B}_5$ is proper. Writing
$c$ for ${\rm cos}(\theta)$
and $s$ for ${\rm sin}(\theta)$, we obtain
$$ H_t(z,w) = (cz - sw^2, zw, (cz-sw^2) (sz+cw^2), zw(sz+cw^2),
(sz+cw^2)^2). \eqno (3) $$
When $t=0$ in (3) we obtain $f$ and when $t=1$ in (3) we obtain $g$.

\end{example}

Other natural numerical invariants such as the maximum number of
inverse images of a point also fail to be invariant under homotopy.

\begin{remark} In Example 2.1, the maximum
number of inverse images of a point is {\it not} a homotopy invariant,
even though all maps in the family
have the same embedding dimension. Note that $f^{-1} (0,0,0,0,a)$ has
four points for $a \ne 0$.
No point in the image of $g$ has more than two inverse images. 
 \end{remark}

The next example does behave as in the one-dimensional case.

\begin{example} Each  automorphism $\phi$ of ${\bf B}_n$
 is homotopic to the identity map.
The proof is easy; each automorphism is a composition of a unitary
transformation $U$ and a linear fractional
automorphism of the form
$$z \to {L_a(z) - a \over 1 - \langle z,a\rangle}. $$
Here $L_a$ is a linear map depending continuously on $a$, and $a$
is a point in the unit ball.
By multiplying $a$ by $1-t$, and deforming $U$ into the identity, we
obtain a family $H_t$
where $H_0(z) = \phi(z)$ and $H_1 (z)=z$.
\end{example}

The following result relates the various equivalence relations.

\begin{proposition} Let $f,g:{\bf B}_n \to {\bf B}_N$
be proper holomorphic maps.
\begin{itemize}
\item If $f$ and $g$ are norm equivalent, then they are spherically
equivalent. The converse fails.
\item If $f$ and $g$ are spherically equivalent, then they are
homotopy equivalent. The converse fails.
\end{itemize}\end{proposition}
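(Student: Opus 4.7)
For the first bullet I would assume $||f(z)||^2 = ||g(z)||^2$ on ${\bf B}_n$, expand in power series $f_j(z) = \sum_\alpha c_{j,\alpha} z^\alpha$ and $g_j(z) = \sum_\alpha d_{j,\alpha} z^\alpha$, and compare coefficients of $z^\alpha \overline{z^\beta}$ in the identity $||f||^2 = ||g||^2$ to obtain
$$\sum_{j=1}^N c_{j,\alpha}\overline{c_{j,\beta}} = \sum_{j=1}^N d_{j,\alpha}\overline{d_{j,\beta}}$$
for all multi-indices $\alpha,\beta$. Reading this as $CC^* = DD^*$ for the infinite-row, $N$-column coefficient matrices $C$ and $D$, the uniqueness of such a factorization up to a right unitary factor yields $U \in U(N)$ with $D = CU$; equivalently $g = Uf$, so $g = U \circ f \circ \mathrm{id}$ is a spherical equivalence. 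For the failure of the converse, I would take $f(z) = z$ and $g(z) = \phi(z)$ on ${\bf B}_1$ for any nontrivial M\"obius automorphism $\phi$: then $g = f \circ \phi$ makes them spherically equivalent, while $|g(z)|^2 \ne |z|^2$.

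For the second bullet I would assume $g = \chi \circ f \circ \phi$ and exploit the fact that $\mathrm{Aut}({\bf B}_n)$ and $\mathrm{Aut}({\bf B}_N)$ are connected real Lie groups, hence path-connected. I would then choose continuous paths $\phi_t$ and $\chi_t$ satisfying $\phi_0 = \mathrm{id}$, $\phi_1 = \phi$, $\chi_0 = \mathrm{id}$, $\chi_1 = \chi$, and set $H_t := \chi_t \circ f \circ \phi_t$. This is a continuous family of proper holomorphic maps (composition of proper maps between balls is proper, as noted at the start of the section) with $H_0 = f$ and $H_1 = g$. For the failure of the converse I would cite Example 2.1: those two maps have degrees $4$ and $3$, and since every ball automorphism is a linear fractional map of degree one, the degree of a rational proper mapping between balls is invariant under spherical equivalence; thus $f$ and $g$ in Example 2.1 are not spherically equivalent, even though that example exhibits an explicit homotopy joining them.

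The main obstacle is the linear-algebra lemma behind the first bullet, namely the implication $CC^* = DD^* \Rightarrow D = CU$ for some $U \in U(N)$, where $C, D$ have infinitely many rows and $N$ columns. The cleanest approach is probably to restrict to a finite-rank truncation where the ordinary polar decomposition applies and then verify that the resulting $U$ is independent of the truncation, or equivalently to package the columns of $C$ and $D$ as vectors in a common Hilbert space of power series where $||f||^2$ encodes the Gram data intrinsically. Once that lemma is cleanly stated, the remainder of the proof is routine.
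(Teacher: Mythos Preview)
Your proposal is correct and follows essentially the same approach as the paper: the paper invokes the lemma $||f||^2=||g||^2 \Rightarrow g=Uf$ by citing [D] (you sketch its proof via Gram matrices), and it deforms the automorphisms exactly as you do, using the explicit path in Example~2.2 rather than the abstract path-connectedness of the automorphism group. The only substantive difference is the counterexample for the second converse: the paper uses a Blaschke product with three distinct factors versus $z^3$ (homotopic by Proposition~2.1, but not spherically equivalent), whereas you invoke Example~2.1 and the spherical invariance of degree---both arguments are valid.
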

\begin{proof} By [D], the equality  $||f||^2=||g||^2$ implies that
there is a unitary map $U$
such that $g=Uf$ and hence $f$ and $g$ are spherically equivalent. The
converse fails: if $f$ is the identity
and $g$ is an automorphism which moves the origin, then $f$ and $g$
are spherically equivalent but not norm
equivalent. Consider the second statement. Suppose there are automorphisms
$\chi$ and $\phi$ for which $g = \chi \circ f \circ \phi$.
We may then deform each automorphism as in Example 2.2 to obtain a
homotopy between $f$ and $g$.
The converse fails: consider a Blaschke product $f$ with three
distinct factors. By Proposition 2.1, $f$
is homotopic to $z^3$; it is easy to see that $f$ is not spherically
equivalent to $z^3$.
The same idea works in higher dimensions upon replacing product by
tensor product. \end{proof}

Homotopy in the equi-dimensional case is easy. Proposition 2.1 handled
the one-dimensional case.
For $n\ge 2$ we have:

\begin{proposition} For $n \ge 2$, let $f:{\bf B}_n \to {\bf B}_n$
be a proper holomorphic map.  Then $f$ is homotopic to the identity.
\end{proposition}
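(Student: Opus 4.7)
The plan is to combine two ingredients already available: a rigidity theorem for proper self-maps of the ball in dimension at least two, and the deformation of automorphisms produced in Example 2.2.

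First I would invoke Alexander's theorem, which asserts that for $n \ge 2$ every proper holomorphic self-map $f : \mathbf{B}_n \to \mathbf{B}_n$ is in fact an automorphism of $\mathbf{B}_n$. This is the step that fails in one variable (where finite Blaschke products of arbitrary degree are available), and it is precisely what makes the equidimensional higher-dimensional case so rigid compared to Proposition 2.1. Since this is a standard result, I would just cite it rather than reprove it.

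Once $f$ is known to be an automorphism, the conclusion follows directly from Example 2.2: every automorphism of $\mathbf{B}_n$ factors as $U \circ \psi_a$, where $U \in U(n)$ and $\psi_a$ is the Heisenberg-type linear fractional automorphism sending $a \in \mathbf{B}_n$ to the origin, depending continuously on $a$. Scaling $a$ to $(1-t)a$ deforms $\psi_a$ continuously through automorphisms to the identity, while path connectedness of $U(n)$ lets us deform $U$ continuously to the identity. The concatenation (or product) of these two deformations yields a continuous family $H_t$ of automorphisms of $\mathbf{B}_n$ with $H_0 = f$ and $H_1 = \mathrm{id}$. Each $H_t$ is automatically proper, so it is a legitimate homotopy in the sense of Definition 2.1.

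There is really no obstacle of substance here once Alexander's theorem is in hand; the only thing to be slightly careful about is that the parametrization of automorphisms used in Example 2.2 is genuinely continuous jointly in $(z,t)$ on $\mathbf{B}_n \times [0,1]$, which is immediate from the explicit formula. Thus the entire content of the proposition is the appeal to the rigidity theorem, with the homotopy then supplied for free by the structure of $\mathrm{Aut}(\mathbf{B}_n)$.
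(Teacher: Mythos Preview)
Your proof is correct and follows essentially the same route as the paper: cite the rigidity theorem asserting that every proper holomorphic self-map of $\mathbf{B}_n$ ($n\ge 2$) is an automorphism, then invoke Example~2.2 to deform any automorphism to the identity. The only difference is attribution---the paper credits Pin\v{c}uk rather than Alexander---which is not a mathematical distinction.
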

\begin{proof} By a well-known result of Pincuk, $f$ must be an
automorphism. By Example 2.2,
$f$ is homotopic to the identity. \end{proof}

We note that if $f$ and $g$ are homotopy equivalent
in target dimension $M_0$, then they are homotopy equivalent in 
target dimension $M$ if $M \ge M_0$.

We have the following simple result, noted years ago (in different
language) by the first author in [D]. The map $H_t$ in the proof of this 
proposition is called the {\it juxtaposition} of $f$ and $g$.

\begin{proposition} Let $f:{\bf B}_n \to {\bf B}_N$ and $g:{\bf B}_n
\to {\bf B}_K$
be proper holomorphic maps. Then $f$ and $g$ are {\it homotopic in
target dimension $M$} if $M \ge n + {\rm max}(N,K)$.
Furthermore, if $f$ and $g$ are rational, then the same conclusion holds
with homotopy replaced by homotopy through rational maps.
\end{proposition}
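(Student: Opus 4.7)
The plan is to juxtapose each of $f$ and $g$ with the identity map $z\mapsto z$ of ${\bf B}_n$, routing the homotopy through the intermediate proper map $z\mapsto(0,z):{\bf B}_n\to{\bf B}_{n+N}$. Because a homotopy taking values in ${\bf B}_{M_0}$ lifts to every larger target dimension by appending zero components (as noted immediately before the statement), it suffices to treat $M_0:=n+\max(N,K)$. Without loss of generality assume $N\ge K$, and set $\widetilde g:=(g,{\bf 0}):{\bf B}_n\to{\bf B}_N$. This extension is proper (its squared norm equals $\|g\|^2$, which tends to $1$ on the boundary) and is rational if $g$ is, so $f$ and $\widetilde g$ now share the common target dimension $N$.

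Next I would define $H_t:{\bf B}_n\to{\bf C}^{n+N}$ piecewise by
$$H_t(z)=\bigl(\cos(\pi t)\,f(z),\,\sin(\pi t)\,z\bigr)\qquad\text{for }t\in[0,\tfrac12],$$
$$H_t(z)=\bigl(\sin(\pi(t-\tfrac12))\,\widetilde g(z),\,\cos(\pi(t-\tfrac12))\,z\bigr)\qquad\text{for }t\in[\tfrac12,1].$$
The two formulas agree at $t=\tfrac12$, both giving $(0,z)$, so $H_t$ is continuous in $(z,t)$, and its Taylor coefficients in $z$ depend continuously on $t$ because $\cos(\pi t)$ and $\sin(\pi t)$ do. The endpoints satisfy $H_0(z)=(f(z),0)$ and $H_1(z)=(\widetilde g(z),0)$, which after the identification of ``$\sim$'' in Definition~2.3 are precisely $j_1\circ f$ and $j_2\circ g$.

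For the properness of each slice, the key computation on $[0,\tfrac12]$ is
$$\|H_t(z)\|^2=\cos^2(\pi t)\,\|f(z)\|^2+\sin^2(\pi t)\,\|z\|^2,$$
with an analogous identity on $[\tfrac12,1]$ using $\widetilde g$. Since $\|f(z)\|^2,\|z\|^2\to 1$ as $\|z\|\to 1$, the Pythagorean identity forces $\|H_t(z)\|^2\to 1$, so each $H_t$ maps ${\bf B}_n$ properly into ${\bf B}_{n+N}$. Rationality is preserved throughout because $\cos(\pi t)$ and $\sin(\pi t)$ enter only as scalar multipliers of rational components. To reach any $M\ge M_0$, pad with additional zero components. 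I expect the only conceptual point (rather than an obstacle) to be the choice of intermediate map: routing through $(0,z)$ instead of forming the naive juxtaposition $(\cos(\pi t)f,\sin(\pi t)\widetilde g)$, which would only yield target dimension $N+K$, is exactly what produces the summand $n$ in $n+\max(N,K)$; the remainder is bookkeeping with the Pythagorean identity and the piecewise concatenation.
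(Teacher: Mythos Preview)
Your proof is correct and follows essentially the same approach as the paper: juxtapose each map with the identity to route the homotopy through the intermediate map $(0,z)$, using the Pythagorean identity to verify properness. The only cosmetic differences are that the paper uses the weights $\sqrt{1-t^2}$ and $t$ in place of your $\cos$ and $\sin$, and invokes transitivity of the equivalence relation rather than writing out an explicit piecewise concatenation; your version is slightly more direct in that it bypasses the preliminary $N+K$ bound and goes straight to $n+\max(N,K)$.
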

\begin{proof} First define $H_t$ by $H_t = \sqrt{1-t^2}f \oplus t g$. Then
$$||H_t(z)||^2 = (1-t^2)||f(z)||^2 + t^2 ||g(z)||^2. \eqno (4) $$
When $||z||$ tends to one, $||H_t(z)||$ also does.
It follows that $H_t$ is a proper mapping from ${\bf B}_n$ to ${\bf B}_{N+K}$.
The continuity in $t$ is obvious. Formula (4) makes evident the norm
equivalence at the endpoints $0$ and $1$.
Thus $f$ and $g$ are homotopy equivalent in target dimension $N+K$. We
can lower this dimension to $n + {\rm max}(N,K)$.
To do so, let $I$ denote the identity mapping. By the same reasoning,
$f$ is homotopic to $I\oplus 0$ in dimension $n+N$
and $g$ is homotopic to $I\oplus 0$ in dimension $n+K$. Since homotopy is 
an equivalence relation,
$f$ and $g$ are homotopic in target dimension $n + {\rm max}(N,K)$,
and hence also for any larger target dimension. 
When $f$ and $g$ are rational, the same argument 
provides a rational homotopy between them. \end{proof}

\section{homotopy equivalence through rational maps (general results)}

In Example 3.1 below, the number of spherical equivalence classes is finite.
In general this number is infinite. For example, as soon as $N \ge 2n$,
there are one-dimensional families
of spherically inequivalent proper mappings. These mappings can be
taken to be quadratic polynomials. See pages 168-169
of [D] and also [L]. Quadratic polynomial proper maps also appear in [JZ].
By contrast, when the domain dimension is at least
$2$, we will show in Theorem 3.1 that 
the number of {\bf homotopy classes}
for rational proper mappings, in each target dimension,  is finite.

\begin{proposition}
Fix integers $d$ and $n$. There exists a constant $B=B(n,d)$ such that
the following holds: If $h:{\bf B}_n \to {\bf B}_N$ is a {\it proper} rational map
of degree at most $d$, written as $h = \frac{p}{q}$ with $p,q$ of degree at most $d$,
where $q(0) = 1$, and $q$ is not zero on ${\bf B}_n$,
then the coefficients of (the polynomials) $p$ and $q$
are bounded by $B$.
\end{proposition}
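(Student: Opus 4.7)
The plan is to argue by contradiction using compactness in a finite-dimensional space of polynomials. The key maneuver is to bypass the variable target dimension $N$ by working with the scalar Hermitian polynomial $R(z,\bar z) := \|p(z)\|^2$ instead of $p$ itself: writing $p = \sum c_\alpha z^\alpha$ with $c_\alpha \in {\bf C}^N$, the coefficients of $R$ are the Gram products $\langle c_\alpha, c_\beta\rangle$, whose diagonal entries are the squared vector norms $\|c_\alpha\|^2$. Hence controlling the coefficients of $R$ is equivalent to controlling all the norms $\|c_\alpha\|$. It therefore suffices to bound the (scalar) coefficients of $R$ and of $q$, which lie in fixed finite-dimensional spaces depending only on $n$ and $d$. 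Properness of $h = p/q$ translates into the identity $R = |q|^2$ on $\partial {\bf B}_n$, the inequality $R \le |q|^2$ on $\overline{{\bf B}_n}$, and $R \ge 0$ automatically.

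Suppose the proposition fails. Then there is a sequence $h_k = p_k/q_k$ satisfying the hypotheses for which $M_k$, the maximum of $\max_\alpha \|c_{k,\alpha}\|$ and $\max_\alpha |b_{k,\alpha}|$ (with $b_{k,\alpha}$ the coefficients of $q_k$), tends to infinity. Rescale by $\tilde p_k := p_k/M_k$, $\tilde q_k := q_k/M_k$, and $\tilde R_k := \|\tilde p_k\|^2$. The rescaled objects still satisfy $\tilde R_k = |\tilde q_k|^2$ on $\partial {\bf B}_n$, $\tilde R_k \le |\tilde q_k|^2$ on $\overline{{\bf B}_n}$, and $\tilde R_k \ge 0$; meanwhile $\tilde q_k(0) = 1/M_k \to 0$. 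For each $k$ the rescaling forces either a diagonal coefficient of $\tilde R_k$ (equal to $\|\tilde c_{k,\alpha}\|^2$) to equal $1$ or a coefficient of $\tilde q_k$ to have modulus $1$. By compactness in the fixed-dimensional coefficient spaces together with pigeonhole on the finitely many multi-indices $|\alpha| \le d$, extract a subsequence along which $\tilde R_k \to \tilde R$, $\tilde q_k \to \tilde q$, and the distinguished coefficient of modulus $1$ sits at a single multi-index for every $k$, hence survives in the limit.

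By Hurwitz's theorem (used in several variables by restriction to generic complex lines), since each $\tilde q_k$ is nonvanishing on ${\bf B}_n$ and $\tilde q(0) = 0$, the limit $\tilde q$ must be identically zero. Then $\tilde R \le |\tilde q|^2 \equiv 0$ together with $\tilde R \ge 0$ forces $\tilde R \equiv 0$ as well, contradicting the fact that $\tilde R$ or $\tilde q$ carries a coefficient of modulus $1$.

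The main obstacle is precisely the $N$-dependence of the hypothesis: one cannot apply finite-dimensional compactness to $p$ directly. The passage to the scalar Hermitian polynomial $R = \|p\|^2$, whose ambient space has dimension depending only on $n$ and $d$, is the essential trick that removes $N$ from the picture. Given this reduction, the rest is a routine compactness argument combined with Hurwitz's theorem.
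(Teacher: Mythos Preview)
Your argument is correct, but it takes a genuinely different route from the paper's. The paper proceeds directly and constructively: it first bounds the coefficients of $q$ alone, using only the hypotheses $q(0)=1$ and $q\neq 0$ on ${\bf B}_n$. In one variable this comes from factoring $q(z)=\prod_j(1-z\overline{a_j})$ with $|a_j|\le 1$; in $n$ variables one slices along complex lines through the origin to reduce to the one-variable bound, obtaining a sup-norm estimate for $q$ on the ball and then invoking the Cauchy formula. Once $|q|$ is bounded on the sphere, properness gives $\|p\|=|q|$ there, and a second application of the Cauchy formula (for the vector-valued map $p$) bounds each $\|c_\alpha\|$ independently of $N$. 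No compactness, no Hurwitz, and no passage to $R=\|p\|^2$ is needed.

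Your approach, by contrast, is a soft compactness/contradiction argument, and the Gram-polynomial trick $R=\|p\|^2$ is what makes it work: it replaces the $N$-dependent target space by a fixed finite-dimensional space of Hermitian polynomials. This is elegant and reusable, but it is also heavier machinery for this particular statement, and it yields no explicit bound. Note, incidentally, that the ``main obstacle'' you identify---the variable $N$---is specific to your compactness strategy; the paper's direct estimate on $\|p\|$ via the Cauchy integral is automatically uniform in $N$ because it bounds the vector norm $\|c_\alpha\|$ rather than individual components.
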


\begin{proof}
Let $p$ and $q$ be as in the statement.  Let us first show that
coefficients of $q$ are bounded.  It is enough to assume that $q(0)=1$ and
$q$ is nonzero on the unit ball.  In one dimension, the claim follows
by simply factorizing $q$ as
$$
q(z) = \prod_j \left(1-z {\overline a_j}\right).
$$
Then the $a_j$ are all of modulus at most $1$ and 
the coefficients
of $q$ are in fact bounded by the largest binomial coefficient of the form ${d \choose k}$.

Next, in $n$ dimensions, decompose $q$ into homogeneous parts as $q(z) = 1 +
q_1(z) + q_2(z) + \cdots + q_d(z)$.  For each $z \in {\bf B}_n$ we have 
$$
q(\xi z) = 1 + \xi q_1(z) + \xi^2 q_2(z) + \cdots + \xi^d q_d(z).
$$
This polynomial in $\xi \in {\bf C}$ has no zeros on the unit disk.
Therefore all its coefficients $q_j(z)$ are bounded by $B(1,d)$ by the argument above.  Hence
$|q(z)| \leq (d+1) B(1,d)$ for $z \in {\bf B}_n$.  Via the Cauchy formula all the
coefficients of $q$ are bounded by some constant depending only on $n$ and
$d$.

Next, since $||h|| = 1$ on the unit sphere, we have
$$
||p(z)|| = |q(z)| \leq d B(n,d)
$$
for $z$  in
the unit sphere.  Again via the Cauchy formula, the coefficients of $p$
are bounded by a constant only depending on $n$ and $d$.
\end{proof}

The proposition says that we can always find a representative $\frac{p}{q}$ with
bounded coefficients, while at the same time normalizing with $q(0) = 1$ and
not requiring $p$ and $q$ to be
in lowest terms.  The following set will be useful.
\begin{definition} Fix a positive integer $d$. Let ${\mathcal R}$
 denote the set of polynomial maps
$$ (p,q):{\bf C}^n \to {\bf C}^N \times {\bf C}$$
with the following properties:
\begin{itemize}

\item The degree of $q$ and of each component polynomial of $p$ is at most $d$.

\item $q(0)=1$, and $q$ is not zero on ${\bf B}_{n}$.

\item ${p \over q}: {\bf B}_n \to {\bf B}_N$  is proper.
\end{itemize}
\end{definition}

Proposition 3.1 implies that the set
${\mathcal R}$ is relatively compact.
Let
${\mathcal R}^*$ denote the closure of
${\mathcal R}$.  The first property is obviously preserved
under closure. The second property is preserved because of the Hurwitz theorem;
since $q(0)=1$, the limit function is not identically $0$, and hence never $0$.
The condition that $\frac{p}{q}$
must be a proper map need not be preserved, but we can easily identify the limits.
By continuity, if $(p,q) \in {\mathcal R}^*$,
we must have $||p||^2 = |q|^2$ on the unit sphere. 
Then $\frac{p}{q}$ is a proper map of balls if it is nonconstant.  Hence,
${\mathcal R}^*$ is a compact set that includes ${\mathcal R}$
and also the polynomials $(p,q)$, where $\frac{p}{q}$ is constant.

Given a sequence $\{ \omega_j \}$ of positive weights, we let $\ell^1(\omega)$
denote the space of sequences  $\{ x_j \}$ for which
$$\sum \omega_j |x_j| < \infty.$$
We say that $\{x_j\}$ is in weighted $\ell^1$ with weights $\omega_j$.
We will use this space when the sequence is indexed by multi-indices.

Let $H_t:{\bf B}_n \to {\bf B}_N $ be a family of proper mappings between balls.
We can expand each $H_t$ as a power series converging uniformly on compact
subsets
of ${\bf B}_n$.  If we assume that the function $t \to H_t(z)$ is continuous,
it follows that each Taylor coefficient depends continuously on $t$.
It also follows
that the Hermitian matrix of Taylor coefficients of $||H_t||^2$
depends continuously on $t$.

Here we do not require the homotopies to
have any regularity on the boundary. Let $H_t$ be a family of
holomorphic mappings defined on
the unit ball. We write
$$  H_t(z) = \sum {\bf c}_\alpha(t) z^\alpha \eqno (5) $$
$$ {|| H_t(z) ||}^2 = \sum_{\alpha,\beta} a_{\alpha\beta}(t) z^\alpha
\bar{z}^\beta. \eqno (6) $$
Thus $a_{\alpha \beta}(t) = \langle c_\alpha(t), c_\beta(t) \rangle$.

\begin{proposition}
Let $H_t(z) \colon {\bf B}_n \to {\bf B}_N$ be a homotopy of
proper maps.  The Taylor coefficients $a_{\alpha \beta}(t)$ of
$||H_t||^2$ and ${\bf c}_\alpha(t)$ of $H_t$ are continuous functions
of $t$.
In fact, there exist fixed weights $\{ \omega_{\alpha\beta} \}$ such that
the map $t \mapsto [ a_{\alpha\beta}(t) ]_{\alpha\beta}$
is a continuous map from $[0,1]$ to $\ell^1(\omega)$.
\end{proposition}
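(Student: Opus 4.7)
The plan is to exploit the Cauchy integral formula on a polydisc sitting inside ${\bf B}_n$, which converts the joint continuity of $H$ on ${\bf B}_n \times [0,1]$ together with the uniform bound $\|H_t(z)\| < 1$ into geometric control on the Taylor coefficients. Fix any radius $r$ with $r\sqrt{n} < 1$, so that the closed polydisc $\{|z_j| \le r\}$ is a compact subset of ${\bf B}_n$. The identity
$$ {\bf c}_\alpha(t) = \frac{1}{(2\pi i)^n} \int_{|z_j|=r} \frac{H_t(z)}{z_1^{\alpha_1+1}\cdots z_n^{\alpha_n+1}}\, dz_1 \cdots dz_n $$
combined with $\|H_t\| \le 1$ on the torus yields the uniform estimate $\|{\bf c}_\alpha(t)\| \le \sqrt{N}\, r^{-|\alpha|}$, and hence $|a_{\alpha\beta}(t)| \le N\, r^{-|\alpha|-|\beta|}$, valid for every $t \in [0,1]$ and every pair of multi-indices.

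For continuity in $t$ of the individual coefficients, I would subtract $H_{t_0}$ inside the Cauchy integrand and invoke the fact that $H$ is uniformly continuous on the compact set $\{|z_j|=r\} \times [0,1]$. The bound
$$ \|{\bf c}_\alpha(t_1) - {\bf c}_\alpha(t_0)\| \le \frac{\sqrt{N}}{r^{|\alpha|}}\,\sup_{|z_j|=r}\|H_{t_1}(z) - H_{t_0}(z)\| $$
then gives continuity of each ${\bf c}_\alpha(t)$, and continuity of $a_{\alpha\beta}(t) = \langle {\bf c}_\alpha(t), {\bf c}_\beta(t) \rangle$ follows immediately from bilinearity.

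For the weighted $\ell^1$ assertion I would choose geometric weights $\omega_{\alpha\beta} = \rho^{|\alpha|+|\beta|}$ with $0 < \rho < r$. Boundedness in $\ell^1(\omega)$ is then automatic from the uniform estimate, since
$$ \sum_{\alpha,\beta} \omega_{\alpha\beta}\,|a_{\alpha\beta}(t)| \;\le\; N \left(\sum_\alpha (\rho/r)^{|\alpha|}\right)^{\!2} < \infty, $$
the inner series converging by comparison with $\sum_k \binom{k+n-1}{n-1}(\rho/r)^k$. The main (though standard) issue is upgrading pointwise continuity of each coefficient to continuity of the full sequence-valued map. The plan is the usual head-plus-tail split: given $\epsilon > 0$, pick a cutoff $K$ large enough that the tail $\sum_{|\alpha|+|\beta|>K} 2N\omega_{\alpha\beta}\, r^{-|\alpha|-|\beta|}$ is smaller than $\epsilon/2$, which is possible because $\rho/r < 1$; then for the finite head sum use pointwise continuity of the $a_{\alpha\beta}$ with $|\alpha|+|\beta| \le K$ to force it below $\epsilon/2$ when $|t_1 - t_0|$ is sufficiently small. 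This dominated-convergence-style argument delivers continuity of $t \mapsto [a_{\alpha\beta}(t)]$ into $\ell^1(\omega)$.
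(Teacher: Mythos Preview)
Your proof is correct and follows essentially the same route as the paper: Cauchy integrals over the distinguished boundary of a polydisc compactly contained in ${\bf B}_n$ give the geometric bounds $\|{\bf c}_\alpha(t)\|\lesssim r^{-|\alpha|}$ and the continuity estimate for ${\bf c}_\alpha$, and then geometric weights $\omega_{\alpha\beta}=\rho^{|\alpha|+|\beta|}$ with $\rho<r$ finish the $\ell^1(\omega)$ statement. The only minor difference is in the last step: the paper observes directly that $|a_{\alpha\beta}(t)-a_{\alpha\beta}(s)|\le C_{\alpha\beta}\sup_T\|H_t-H_s\|$ with $\sum\omega_{\alpha\beta}C_{\alpha\beta}<\infty$, yielding $\ell^1(\omega)$-continuity in one stroke, whereas you do the equivalent head-plus-tail (dominated convergence) split.
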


\begin{proof}
For each $\alpha$, the coefficient  ${\bf c}_\alpha(t)$ is given by a
Cauchy integral
over the distinguished boundary $T$ of a polydisc $P$ centered at $0$
whose closure lies in ${\bf B}_n$:

$$ {\bf c}_\alpha(t) = {1  \over (2 \pi i)^n} \int_T {H_t (\zeta) \over
\zeta^{\alpha+1}} d \zeta. \eqno (9) $$
Since $H_t$ is continuous in $t$, also each $c_\alpha$ is continuous
in $t$. Furthermore, for a constant $C_\alpha$,
(9) yields the estimates
$$ ||{\bf c}_\alpha(t) - {\bf c}_\alpha(s)|| \le C_\alpha \ {\rm sup}_{z
\in T} ||H_t(z) - H_s(z)|| \eqno (10) $$
$$ ||{\bf c}_\alpha(t)|| \le C_\alpha. \eqno (11)$$
Note that (11) holds for all $t$.
Next consider the coefficients $a_{\alpha \beta}$ in the squared norm.
There are  constants $C_{\alpha \beta}$ such that
$$
{|| a_{\alpha\beta}(t) ||} = | \langle c_\alpha , \overline{c_\beta}
\rangle | \leq C_\alpha C_\beta.  $$
For an appropriate choice of weights $\omega_{\alpha \beta}$
(independent of $H$)
we obtain that
$[ a_{\alpha\beta} ]$ is in $\ell^1(\omega)$.

Estimate
$$
|a_{\alpha\beta}(t)-a_{\alpha\beta}(s)| \leq
|| {\bf c}_\alpha (t) - {\bf c}_\alpha(s)|| \  || {\bf c}_\beta (t)||
+ ||  {\bf c}_\beta (t) - {\bf c}_\beta(s) ||  \  || {\bf c}_\alpha (s) || $$
$$   \leq C_{\alpha \beta}
\sup_{z \in T} || H_t(z) - H_s(z) || .
$$

As $s$ tends to $t$,
$\sup_{z \in T} || H_t(z) - H_s(z) ||$ tends to zero.  Therefore,
for the same weights, the map
$t \mapsto [ a_{\alpha\beta} ]$ is continuous from $[0,1]$ to $\ell^1(\omega)$.
\end{proof}

The following lemma is used in verifying the finiteness of the homotopy
classes.  Given a rational family $H_t$, for each $t$ we can write
$H_t = {p_t \over q_t}$
for a polynomial $q_t$ and a polynomial map $p_t$. Since this choice
is not unique, we need to establish
continuity in $t$.

\begin{lemma}
Let $H_t$ be a homotopy of rational functions of degree
at most $d$ in target dimension $N$.  Then
there exists a choice of degree $d$
polynomial maps $p_t \colon {\bf C}^n \to {\bf C}^N$ and $q_t \colon {\bf
C}^n \to {\bf C}$
whose coefficients are continuous functions of $t$ such that
$$
z \mapsto \frac{p_t(z)}{q_t(z)} $$
is a rational proper mapping of ${\bf B}_n$ to ${\bf B}_N$. Furthermore
$$
H_0 = \frac{p_0}{q_0}, \qquad \text{and} \qquad H_1 = \frac{p_1}{q_1}. $$
\end{lemma}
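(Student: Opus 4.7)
The plan is to recast rationality of degree at most $d$ as a family of linear constraints on the coefficients of $(p,q)$ that depend continuously on $t$, and then to construct a continuous section of the resulting affine bundle of solutions by a local construction and a partition-of-unity patching.

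I would begin by setting up the linear algebra. Expand $H_t(z) = \sum_\alpha c_\alpha(t)\, z^\alpha$; by Proposition 3.2 each $c_\alpha(t)$ is continuous in $t$. Writing an unknown denominator as $q(z) = \sum_{|\gamma|\le d} b_\gamma z^\gamma$, the requirement that $qH_t$ be a polynomial of degree at most $d$ becomes the infinite linear system $\sum_{|\gamma|\le d} b_\gamma\, c_{\alpha-\gamma}(t) = 0$ for every $|\alpha| > d$ (with the convention $c_\beta = 0$ when $\beta \notin \mathbb{Z}_{\ge 0}^n$); once $q$ is fixed the numerator is forced to be $p = (qH_t)_{|\alpha|\le d}$, continuous in $(q,t)$. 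Let $A_t$ denote the affine subset of pairs $(p,q)$ of polynomials of degree at most $d$ that solve these constraints and satisfy $q(0) = 1$; by rationality $A_t$ is nonempty, and by Proposition 3.1 it is contained in a fixed compact convex subset of a fixed finite-dimensional vector space. To build a continuous local section around a point $t_0 \in [0,1]$, write the denominator constraint as $M(t)b = 0$ with $M(t)$ having entries continuous in $t$, fix a vector-space complement $W$ of $\ker M(t_0)$ on which $M(t_0)|_W$ is injective; a suitable minor of $M(t)|_W$ then stays nonvanishing on a neighborhood of $t_0$, and Cramer's rule yields a continuous family $t \mapsto q_t \in \ker M(t)$ on that neighborhood, from which $p_t$ follows continuously.

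Next I would patch these local sections globally. Cover $[0,1]$ by finitely many such neighborhoods $U_j$ with continuous local sections $\sigma_j \colon U_j \to A_t$, choose a subordinate partition of unity $\{\psi_j\}$, and set $\sigma(t) = \sum_j \psi_j(t)\sigma_j(t)$. Because each $A_t$ is an affine subspace and the normalization $q(0)=1$ is preserved under combinations with weights summing to $1$, each $\sigma(t)$ lies in $A_t$ and is continuous in $t$. Writing $\sigma(t) = (p_t, q_t)$ gives the desired representation, with $p_t/q_t = H_t$ for all $t$ (so the endpoint equalities $H_0 = p_0/q_0$ and $H_1 = p_1/q_1$ are automatic).

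The main obstacle is the local-section construction. The rank of $M(t)$ is only upper semicontinuous in $t$: it can drop at isolated $t_0$ precisely when the lowest-terms form of $H_{t_0}$ acquires common factors that do not persist for nearby $t$, making the fiber $A_{t_0}$ strictly larger than the generic fiber. Naive choices such as the lowest-terms representation are therefore discontinuous at such $t_0$, which is why the complement $W$ above must be chosen carefully so that $M(t)|_W$ remains injective on a whole neighborhood of $t_0$; once that is done, the convexity of the fibers makes the partition-of-unity patching routine.
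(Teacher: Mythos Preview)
Your approach is genuinely different from the paper's and in fact aims at a stronger conclusion: you try to produce $(p_t,q_t)$ with $p_t/q_t = H_t$ for \emph{every} $t\in[0,1]$, whereas the lemma only demands equality at the endpoints. The paper explicitly notes after the statement that ``the homotopy obtained in the proof is not necessarily the same as $H_t$.'' Its argument never attempts to lift the given path. Instead it introduces the compact set ${\mathcal R}^*$ and the continuous map $\Phi\colon {\mathcal R}^*\to\ell^1(\omega)$ sending $(p,q)$ to the Taylor coefficients of $\|p/q\|^2$, shows that the fibers of $\Phi$ are connected, deduces that $\Phi^{-1}(\gamma)$ is compact and connected (where $\gamma$ is the curve of coefficients of $\|H_t\|^2$), and finally uses that ${\mathcal R}$ is semialgebraic---hence its connected components are path connected---to extract \emph{some} continuous path in ${\mathcal R}$ joining a representation of $H_0$ to one of $H_1$.

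There is a genuine gap in your local-section step. You write: ``fix a vector-space complement $W$ of $\ker M(t_0)$ on which $M(t_0)|_W$ is injective; a suitable minor of $M(t)|_W$ then stays nonvanishing \dots and Cramer's rule yields a continuous family $t\mapsto q_t\in\ker M(t)$.'' Injectivity of $M(t)|_W$ gives no handle on $\ker M(t)$; one needs elements \emph{annihilated} by $M(t)$, not elements on which $M(t)$ is injective. The natural way to parametrize $\ker M(t)$ as a graph over $\ker M(t_0)$ in the splitting $V=\ker M(t_0)\oplus W$ requires $\operatorname{rank} M(t)=\operatorname{rank} M(t_0)$ near $t_0$, and you yourself point out that the rank can jump. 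When it does, the one-sided limits of the (lower-dimensional) affine sets $A_t$ as $t\to t_0^{\pm}$ are both contained in the larger $A_{t_0}$ but need not coincide; nothing in your argument rules this out, and if they differ there is \emph{no} continuous section through $t_0$, so no choice of $W$ can rescue the construction. Your final paragraph acknowledges exactly this obstacle but does not resolve it. A smaller but related slip: the claim that $A_t$ lies in a fixed compact set via Proposition~3.1 is incorrect as stated, since that proposition assumes $q$ is nonvanishing on ${\bf B}_n$, a condition you have not imposed on $A_t$ (and imposing it would destroy the affineness you need for the partition-of-unity patching). For instance, with $H_t(z)=z$ and $d=2$ in one variable, the pairs $(z-cz^2,\,1-cz)$ lie in $A_t$ for every $c\in{\bf C}$.

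In short, the paper trades the delicate (and possibly false in general) lifting problem for a softer topological argument---connectedness of a compact preimage plus local path-connectedness of a semialgebraic set---at the cost of obtaining a new homotopy rather than a representation of the original one. Your route, if it could be completed, would give more; but the crucial step is not established.
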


Note that the homotopy obtained in the proof is not necessarily the same as $H_t$.

\begin{proof}
Let ${\mathcal R}$ be as in Definition 3.1 and let
${\mathcal R}^*$ be its closure as above.

Let $\Phi$ be the map from ${\mathcal R}^*$ to the weighted $\ell^1$ defined 
by letting $\Phi(p,q)$ be the sequence of Taylor coefficients at $0$
of $|| \frac{p}{q} ||^2$.
The map $\Phi$ is continuous, 
by similar estimates as in the proof of Proposition 3.2.
If $\frac{p}{q}$ is a proper map  of balls, then
by [CS] $\frac{p}{q}$ extends
holomorphically past the sphere.
Thus $\frac{p}{q}$ has only removable
singularities and stays bounded on the sphere and hence on the torus $T$ (as
defined above).

Let $(p_1,q_1)$ and
$(p_2,q_2)$ be two elements in ${\mathcal R}^*$.  These two elements
are close together in the standard topology on ${\mathcal R}^*$
if their coefficients are close together.  Write $q_j(z) = 1-Q_j(z)$,
where $Q_j(0) = 0$.  Suppose $Q_1$ is close enough to $Q_2$, such that
on $T$
$$
|q_1(z)q_2(z)| = |1-Q_1(z)-Q_2(z)+Q_1(z)Q_2(z)| > \frac{1}{2} .
$$
Then suppose that $(p_1,q_1)$ and $(p_2,q_2)$ are close enough so that
on $T$
$$
\| p_1(z)q_2(z)-p_2(z)q_1(z) \| < \epsilon .
$$
Then
$$
\left\| \frac{p_1(z)}{q_1(z)} - \frac{p_2(z)}{q_2(z)} \right\| < 2\epsilon .
$$
In other words,
the sup norm
$$
\sup_{z\in T} \left\| \frac{p_1(z)}{q_1(z)}- \frac{p_2(z)}{q_2(z)} \right\|
$$
can be bounded in terms of the difference of the coefficients of
$(p_1,q_1)$ and $(p_2,q_2)$.

Then the Taylor coefficient
$$
\frac{1}{(2\pi i)^n}
\int_{T} \frac{p(\zeta)/q(\zeta)}{\zeta^{\alpha+1}} d\zeta
$$
can be bounded in terms of the sup norm of $\frac{p}{q}$
over $T$.
Hence, the map $\Phi$ is continuous.

Fix $(P,Q) \in {\mathcal R}^*$.
Next we consider the fiber $\Phi^{-1}\bigl(\Phi(P,Q)\bigr)$.  That is, the
set of all choices $(p,q) \in {\mathcal R}^*$, such that
$|| \frac{p}{q} || = || \frac{P}{Q} ||$.

It is easy to check that the set of all $(p,q) \in {\mathcal R}^*$ with
$\frac{p}{q}=\frac{P}{Q}$ is convex.
If $||h_1|| = ||h_2||$ for two maps of spheres $h_1$ and $h_2$,
then there is a unitary matrix $U$ such that $h_1 = U h_2$ (see [D]),
and the set of unitary matrices is connected.  Therefore,
the fiber $\Phi^{-1}\bigl(\Phi(P,Q)\bigr)$ is connected.

Our homotopy $H_t$ is represented by a
path $\gamma \colon [0,1] \to \ell^1(\omega)$ given by the coefficients
of $||H_t(z)||^2$.  The set $\Phi^{-1}(\gamma)$ is closed in ${\mathcal R}^*$
and therefore compact. It is also connected: if it were disconnected it
would be a union of two disjoint relatively open sets $X_1$ and $X_2$.
For every $t$, as the fiber is connected, $\Phi^{-1}\bigl(\gamma(t)\bigr)$
is a subset of $X_1$ or $X_2$ but not both.  We could therefore
obtain $[0,1]$ as a union of two disjoint compact sets $\Phi(X_1) \cup
\Phi(X_2)$, a contradiction.

The set $\Phi^{-1}(\gamma)$ is therefore
connected.  Furthermore
$\Phi^{-1}(\gamma) \subset {\mathcal R}$.
We next claim
that every connected topological component of $\mathcal R$
is path connected.  This fact follows by showing that
$\mathcal R$ is a semialgebraic set (a set defined by polynomial equations
and inequalities).
The set ${\mathcal R}^*$ is
an intersection of
a closed real algebraic subvariety of the space of polynomials $(p,q)$
of degree at
most $d$ with a polydisc of fixed radius $B$, and thus ${\mathcal R}^*$
is semialgebraic.  The set of polynomials
$(p,q)$ such that $\frac{p}{q}$ is a constant (the set ${\mathcal R}^*
\setminus {\mathcal R}$) is also a subvariety of the
space of polynomials, and hence ${\mathcal R}$ is semialgebraic.

Then there must exist a path in ${\mathcal R}$
from $(p_0,q_0)$ to $(p_1,q_1)$ where
$H_0 = \frac{p_0}{q_0}$ and $H_1 = \frac{p_1}{q_1}$.
This path yields the desired $p_t$ and $q_t$.
\end{proof}

\begin{theorem} Let $S$ denote the set of homotopy classes (of
rational mappings and in target dimension $N$) of proper
rational mappings $f:{\bf B}_n \to {\bf B}_N$.  Assume that $n\ge 2$.
Then $S$ is a finite set. (For $n=1$, $S$ is countable by Proposition
2.1.) \end{theorem}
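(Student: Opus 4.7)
The plan is to combine a known degree bound with the semialgebraic structure of the set ${\mathcal R}$ from Definition 3.1. Three ingredients converge: a uniform degree bound for proper rational maps in higher dimensions, the semialgebraic nature of ${\mathcal R}$, and the lifting Lemma 3.1.

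First I would invoke the theorem, originally due to Forstneric and sharpened by several later authors, that for $n \ge 2$ and fixed $N$ there exists a constant $d=d(n,N)$ such that every proper rational map $f:{\bf B}_n \to {\bf B}_N$ has degree at most $d$. This is precisely the ingredient that fails when $n=1$, and it is what makes the theorem plausible. Fixing such a $d$, every proper rational map $f$ arises as $f = p/q$ for some $(p,q) \in {\mathcal R}$, and Proposition 3.1 confines ${\mathcal R}$ to a bounded polydisc inside the finite-dimensional coefficient space of pairs of polynomials of degree at most $d$.

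Next I would record that ${\mathcal R}$ is semialgebraic, as already observed in passing in the proof of Lemma 3.1. The identity $||p(z)||^2 = |q(z)|^2$ on the unit sphere unpacks, after expanding and matching Taylor coefficients on the sphere, into finitely many real polynomial equations in the real and imaginary parts of the coefficients of $(p,q)$; the normalization $q(0)=1$ is linear; and nonvanishing of $q$ on the closed ball is a first-order statement over ${\bf R}$, hence semialgebraic by Tarski--Seidenberg. By the classical theorem of Whitney and Lojasiewicz, a bounded semialgebraic set has only finitely many connected components, each of which is path connected. Hence ${\mathcal R}$ has finitely many path components.

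Finally I would combine this with Lemma 3.1. Two proper rational maps $f,g:{\bf B}_n \to {\bf B}_N$ are homotopic through rational maps (in target dimension $N$) if and only if they admit representations $(p_0,q_0),(p_1,q_1) \in {\mathcal R}$ lying in the same path component of ${\mathcal R}$: one direction is Lemma 3.1, and the other is immediate since any continuous path $(p_t,q_t)$ in ${\mathcal R}$ yields a rational homotopy $H_t = p_t/q_t$. Thus the assignment sending a path component of ${\mathcal R}$ to the corresponding homotopy class is a surjection onto $S$, and $S$ is finite. The main obstacle is really the external degree bound --- without it the argument collapses, as the $n=1$ case confirms --- but with that ingredient in hand, the remainder is a clean application of semialgebraic geometry.
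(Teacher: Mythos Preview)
Your proof is correct and follows the same three-step strategy as the paper: invoke the degree bound available when $n\ge 2$, place all proper rational maps into a bounded semialgebraic parameter space with finitely many path components, and appeal to Lemma~3.1 to pass between path components and rational homotopy classes. The only cosmetic difference is that you parameterize directly by the set ${\mathcal R}$ of pairs $(p,q)$, whereas the paper passes through the associated Hermitian forms $c_{\alpha\beta}=\langle {\bf C}_\alpha,{\bf C}_\beta\rangle - D_\alpha\overline{D_\beta}$ subject to sphere-vanishing and signature constraints; both routes produce a bounded semialgebraic set and the same conclusion.
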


\begin{proof} We first need to know that a degree bound holds. That
is, for $n \ge 2$ and $f$ as in the statement of the theorem,
the degree of $f$ is bounded by some expression $c(n,N)$. The sharp bound
is not known, [LP], but any bound will do here. For example, by [DL1]
$$ d \le {N(N-1) \over 2(2n-3)}.  $$

Let $f = {p \over q}:{\bf B}_n \to {\bf B}_N$ be a rational proper
map, reduced to lowest terms, and of degree $d$.
We may write
$$ p(z) = \sum_{|\alpha|=0}^d {\bf C}_\alpha z^\alpha, \eqno (12.1)$$
$$  q(z) = \sum_{|\alpha|=0}^d D_\alpha z^\alpha,\eqno (12.2)$$
where ${\bf C}_\alpha \in {\bf C}^N$ and $D_\alpha \in {\bf C}$.
Consider the Hermitian polynomial $R$ defined by $R=||p||^2 - |q|^2$.
It is of degree at most $d$ in $z$ and of total degree at most $2d$, and it
is divisible by $||z||^2 -1$. A proper holomorphic mapping $f$ of
degree at most $d$
thus determines a Hermitian form
$$ R(z, {\overline z}) = \sum_{ |\alpha|, |\beta| \le d} c_{\alpha
\beta} z^\alpha {\overline z}^\beta \eqno (13) $$
on the vector space of polynomials of degree at most $d$. Note that
$$ c_{\alpha \beta} = \langle {\bf C}_\alpha, {\bf C}_\beta \rangle  - D_\alpha
{\overline D_\beta}. \eqno (14) $$

We next find the explicit condition for the expression in (13) to
vanish on the sphere.
Put $z_j = r_j e^{i \theta_j}$. Thus $r =(r_1,...,r_n) = (|z_1|,...,|z_n|)$ and
$\theta = (\theta_1,...,\theta_n)$. Assume that (13) vanishes on the
sphere. Equating Fourier coefficients
shows the following: on the set $S$ defined by $\sum r_j^2 = 1$, and for
each multi-index $\nu$, we have
$$  \sum_\beta (\langle {\bf C}_{\beta + \nu}, {\bf C}_\beta \rangle -
D_{\beta +
\nu}{\overline D_\beta}) r^{2\beta}= 0. \eqno (15) $$
By putting $x_j=r_j^2$ we can regard these conditions as equalities on
the hyperplane $\sum x_j =1$.

Thus a proper holomorphic rational mapping $f$ of degree at most $d$ determines
a Hermitian form
$$ \sum_{ |\alpha|, |\beta| \le d} c_{\alpha \beta} z^\alpha
{\overline z}^\beta. $$
The space of forms is a finite-dimensional real vector space $V$.
The conditions in (10) are linear in the coefficients $c_{\alpha \beta}$,
and hence determine a subspace of $V$. The forms are restricted
further because each such
form must have at most $N$ positive and exactly one negative
eigenvalue. This restriction
is determined by finitely many polynomial inequalities on the
$c_{\alpha \beta}$.
Since dividing all the coefficients by
the same constant does not change the proper map $f$, we may assume
that the squared norm
of the coefficients equals one. Hence (the norm equivalence class of)
a proper rational map corresponds to the intersection of the unit sphere
in a finite-dimensional real vector space with a set described by
finitely many polynomial inequalities.
Such a set can have at most a finite number of components. By the lemma, each
component corresponds to a collection
of homotopic rational proper mappings with target dimension at most $N$.
\end{proof}

\begin{theorem} Assume $n\ge 2$.
Let  $H_t \colon {\bf B}_n \to {\bf B}_N$ be a homotopy of
rational proper maps.
Fix $t_0 \in [0,1]$. The set
$$ \{ t \in [0,1] : H_t \text{ is spherically equivalent to } H_{t_0} \}$$
is closed in $[0,1]$.
\end{theorem}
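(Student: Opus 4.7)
The plan is to verify that $S$ is sequentially closed: given $(t_k) \subset S$ with $t_k \to t^*$, I aim to produce automorphisms $\phi \in \mathrm{Aut}(\mathbf{B}_n)$ and $\chi \in \mathrm{Aut}(\mathbf{B}_N)$ with $H_{t^*} = \chi \circ H_{t_0} \circ \phi$. For each $k$, spherical equivalence to $H_{t_0}$ provides automorphisms $\phi_k, \chi_k$ with $H_{t_k} = \chi_k \circ H_{t_0} \circ \phi_k$, and Proposition 3.2 (or the continuous representation from Lemma 3.1) ensures $H_{t_k} \to H_{t^*}$ locally uniformly on $\mathbf{B}_n$, so any sub-limit of the right-hand side equals $H_{t^*}$.

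To extract limits of the $\phi_k, \chi_k$ I would put each in standard normal form: $\phi_k = U_k \psi_{a_k}$ with $U_k \in U(n)$ unitary and $\psi_a$ the involution of $\mathbf{B}_n$ swapping $0$ and $a$, and similarly $\chi_k = V_k \psi_{b_k}$. The unitary factors lie in the compact group $U(n) \times U(N)$ and the centers in $\bar{\mathbf{B}}_n \times \bar{\mathbf{B}}_N$, so after passing to a subsequence I may assume $U_k \to U$, $V_k \to V$, $a_k \to a \in \bar{\mathbf{B}}_n$, and $b_k \to b \in \bar{\mathbf{B}}_N$. If both $a$ and $b$ are interior, then $\phi_k \to \phi := U\psi_a$ and $\chi_k \to \chi := V\psi_b$ uniformly on $\bar{\mathbf{B}}_n$ and $\bar{\mathbf{B}}_N$ respectively, and since $H_{t_0}$ is rational and extends holomorphically past the sphere (by [CS], as already used in the proof of Lemma 3.1), taking the limit in $H_{t_k} = \chi_k \circ H_{t_0} \circ \phi_k$ yields $H_{t^*} = \chi \circ H_{t_0} \circ \phi$, so $t^* \in S$.

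The main obstacle is ruling out boundary limits for $a$ or $b$. A direct computation with the explicit form of $\psi_a$ shows that if $a_k \to a \in \partial \mathbf{B}_n$ then $\psi_{a_k}$ converges locally uniformly on $\mathbf{B}_n$ to the constant $a$; composing with the continuous extension of $H_{t_0}$ then forces $H_{t_0} \circ \phi_k$ to tend to the single boundary point $p := H_{t_0}(Ua) \in \partial \mathbf{B}_N$ on compact subsets of $\mathbf{B}_n$. If $b \in \mathbf{B}_N$ is interior, then $\chi_k \to \chi$ uniformly on $\bar{\mathbf{B}}_N$, so $H_{t_k}(z) \to \chi(p) \in \partial \mathbf{B}_N$, contradicting that $H_{t^*}$ is a proper, hence non-constant, map taking interior values on $\mathbf{B}_n$. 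If instead $b \in \partial \mathbf{B}_N$ and $p \neq b$, an analogous short computation shows $\psi_{b_k}(w_k) \to b$ for any sequence $w_k \to p$, so again $H_{t_k}(z) \to Vb \in \partial \mathbf{B}_N$, a contradiction. A symmetric argument handles the case $b \in \partial \mathbf{B}_N$ with $a$ interior.

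The delicate residual case is the coordinated degeneration $a \in \partial \mathbf{B}_n$, $b \in \partial \mathbf{B}_N$, with $p = b$, which produces a $0/0$ indeterminacy in $\psi_{b_k}(H_{t_0}(\phi_k(z)))$. I expect this to be controlled by analyzing the rates $1 - |a_k|^2$ and $1 - |b_k|^2$ together with the transversal vanishing of $H_{t_0}(w) - p$ at $w = Ua$ — a transversality one gets from properness and rationality via the fact that $H_{t_0}$ extends holomorphically across $\partial \mathbf{B}_n$ and sends it into $\partial \mathbf{B}_N$ with nowhere vanishing normal derivative. This rate analysis should show that any such limit that is an actual proper map arises from a limiting pair of parameters for which the relevant quotients stay bounded away from $0$ and $\infty$; after rescaling those parameters we land back in the interior case, completing the proof.
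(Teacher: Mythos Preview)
Your approach is different from the paper's and leaves a genuine gap in exactly the case you flag as ``delicate.'' You correctly reduce to the coordinated degeneration $a\in\partial\mathbf{B}_n$, $b\in\partial\mathbf{B}_N$ with $H_{t_0}(Ua)=b$, but your proposed resolution (``analyzing the rates $1-|a_k|^2$ and $1-|b_k|^2$'' and a transversality/nonvanishing-normal-derivative claim, then ``rescaling back to the interior'') is only a heuristic. You neither carry out the rate comparison nor justify why a non-constant limit of $\chi_k\circ H_{t_0}\circ\phi_k$ with degenerating $\phi_k,\chi_k$ must still be spherically equivalent to $H_{t_0}$; a priori the limit could be a genuinely new proper map, and nothing you wrote excludes that. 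The transversality assertion you invoke is also not proved in your outline.

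The paper sidesteps this difficulty entirely by a different device. It homogenizes: an automorphism is identified with a linear map on $\mathbf{C}^{n+1}$ (resp.\ $\mathbf{C}^{N+1}$), and composition on both sides becomes a continuous map
\[
\mathbf{T}\colon \mathcal{R}^*\times\overline{\mathrm{Aut}(\mathbf{B}_n)}\times\overline{\mathrm{Aut}(\mathbf{B}_N)}\ \longrightarrow\ \mathcal{R}^*
\]
between compact sets, hence a closed map. The crucial observation is that the boundary points of the compactified automorphism groups correspond to \emph{constant} maps; thus the closed image $X=\mathbf{T}\bigl((p_0,q_0)\times\overline{\mathrm{Aut}}\times\overline{\mathrm{Aut}}\bigr)$ consists of the spherical equivalence class of $H_{t_0}$ together with some constants. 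Since every $H_t$ is a nonconstant proper map, $\Phi^{-1}(\gamma)\subset\mathcal{R}$ avoids the constants automatically, and closedness follows. In effect, the paper replaces your case analysis on how $\phi_k,\chi_k$ degenerate by the single structural fact that the only new limits in the compactification are constants, which the homotopy cannot hit. If you want to salvage your direct argument, that is the missing idea: rather than controlling the $0/0$ indeterminacy, show that any limit through the degenerate stratum is constant and hence cannot equal the proper map $H_{t^*}$.
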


\begin{proof}
Let $H_{t_0} = {p_0 \over q_0}$. We must show that the set of $t$ for which $H_t$ is spherically equivalent
to $H_{t_0}$ is closed. To do so, we must determine all rational maps spherically equivalent to a given one.

The degree bounds imply that the degrees of $H_t$ for $t \in [0,1]$
are uniformly bounded by some $d=d(n,N)$.  Let ${\mathcal R}$ be as in Definition 3.1.
As before, let ${\mathcal R}^*$ denote the closure of ${\mathcal R}$ in
the space of all polynomial mappings.  The set ${\mathcal R}^*$ is compact.  Define $\Phi$
as in Lemma 3.1. Thus $\Phi: {\mathcal R}^* \to \ell^1(\omega)$, and we have
shown that $\Phi$ is continuous.  Since ${\mathcal R}^*$
is compact, $\Phi$ is a closed map.

Next we want to see the effect of composition on both sides by automorphisms.
Each domain automorphism has the form
$$ U {a - L_a (z) \over 1 - \langle z,a \rangle} $$
for $U \in {\mathcal U}(n)$ and $a \in {\bf B}_n$. Here
$$ L_a(z) = {\langle z,a \rangle a \over s+1} + s z  $$
and $ s^2 = 1 - ||a||^2$. Thus $Aut({\bf B}_n)$ can be identified with
${\mathcal U}(n) \times {\bf B}_n$. We compactify by allowing $a$ to lie in the unit sphere.
When $||a||=1$, we see that $s=0$ and 
$$ {a - L_a(z) \over 1 - \langle z,a \rangle} = {a - \langle z, a \rangle a \over 1 - \langle z,a \rangle} = a. $$
Therefore the only new maps in ${\overline {Aut({\bf B}_n)}}$ after compactification are constants. We may identify an automorphism with
the linear map on ${\bf C}^{n+1}$ given by
$$ (z,w) \mapsto ( wa - L_a z, w - \langle z,a \rangle),$$
and the constant map with the linear map
$$ (z,w) \mapsto ( a(w- \langle z,a\rangle), w - \langle z,a \rangle). $$

We do the same construction in the target ${\bf C}^N$. Composition on both sides with automorphisms (and with the
degenerate maps in the closure) now
becomes a continuous map
$$ {\bf T} : {\mathcal R}^* \times {\overline {Aut({\bf B}_n)}} \times {\overline {Aut({\bf B}_N)}} \to {\mathcal R}^*. $$

This map ${\bf T}$ is continuous from a compact set to a compact set, and therefore maps
closed sets to closed sets. Since the maps with $||a||=1$ correspond to constant maps,
the image of the automorphisms lies in ${\mathcal R}$.

As before, the  homotopy $H_t$ defines a curve $\gamma:[0,1] \to \ell^1(\omega)$ given by the coefficients
of $||H_t(z)||^2$. Let $X$ denote the image 
$$ {\bf T} \big( (p_0,q_0 ) \times {\overline {Aut({\bf B}_n)}} \times {\overline {Aut({\bf B}_N)}} \big).$$ 
Then $X$ is closed, and $X \cap {\mathcal R}$
is the image under ${\bf T}$ of all projectivized maps spherically equivalent to $H_{t_0}$. 
The set $\Phi^{-1}(\gamma) \cap X$ is compact because, as in the proof of Lemma 3.1,  $\Phi^{-1}(\gamma) \subset {\mathcal R}$.
But $\Phi^{-1}(\gamma) \cap X$ is the set of projectivized maps spherically equivalent to $H_{t_0}$. Since $\Phi$ is continuous
and $X$ is closed, the set of maps in $H_t$ that are spherically equivalent to $H_{t_0}$ is a closed subset of $[0,1]$.

\end{proof}

A homotopy between two spherically inequivalent maps must contain
uncountably many spherically inequivalent maps.

\begin{corollary}
Suppose $H_t$ is a homotopy of proper rational maps between balls in
dimension $N$.
If $H_0$ and $H_1$ are not spherically equivalent, then $H_t$ contains
uncountably many spherically inequivalent maps.

In particular, the juxtaposition $J_t(f,g)$ of any two spherically inequivalent rational
maps always contains uncountably many inequivalent maps.
\end{corollary}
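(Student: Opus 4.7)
The plan is to turn Theorem 3.2 into an uncountability statement via a connectedness argument on $[0,1]$. Define an equivalence relation $\approx$ on $[0,1]$ by setting $s \approx t$ if and only if $H_s$ is spherically equivalent to $H_t$. Theorem 3.2, applied with each choice of $t_0$, says that every equivalence class of $\approx$ is a closed subset of $[0,1]$. The hypothesis that $H_0$ and $H_1$ are spherically inequivalent gives $0 \not\approx 1$, so there are at least two distinct equivalence classes.

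The core step is to show that a partition of $[0,1]$ into more than one closed set must be uncountable. If there were only finitely many classes $F_1,\dots,F_k$ with $k \ge 2$, then each $F_j$ would be the complement of a finite union of closed sets, hence both closed and open; by connectedness of $[0,1]$, each $F_j$ would be empty or all of $[0,1]$, a contradiction. If instead the partition were countably infinite, I would invoke the classical theorem of Sierpi\'nski: a compact connected Hausdorff space cannot be written as a disjoint union of countably infinitely many nonempty closed subsets. Either way, the assumption of countably many classes fails, so uncountably many classes exist, and each class represents a distinct spherical-equivalence class appearing in the family $\{H_t\}$.

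For the second assertion, apply the first part to the juxtaposition $J_t(f,g) = \sqrt{1-t^2}\, f \oplus t g$ introduced in Proposition 2.4. This is a continuous family of proper rational maps with $J_0 = f \oplus 0$ and $J_1 = 0 \oplus g$. Since composition with a unitary of the target is a spherical equivalence, $0 \oplus g$ is spherically equivalent to $g \oplus 0$, so the spherical inequivalence of $f$ and $g$ (in a common enlarged target) persists at the endpoints of $J_t$. The first part then produces uncountably many spherically inequivalent maps within $J_t$.

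The main obstacle I anticipate is purely set-theoretic: one must ensure that the closed-class property of Theorem 3.2 together with connectedness of $[0,1]$ really does force uncountability. Sierpi\'nski's theorem handles the countably infinite case cleanly, while the finite case is dispatched by the clopen argument above, so no serious difficulty should arise beyond citing or recalling these facts. A minor secondary check is that spherical equivalence is compatible with the target-dimension enlargement implicit in the juxtaposition; this is immediate from the action of target unitaries.
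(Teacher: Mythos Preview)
Your proposal is correct and follows essentially the same route as the paper: invoke Theorem~3.2 to see that each spherical equivalence class meets $[0,1]$ in a closed set, then appeal to Sierpi\'nski's theorem to rule out a countable nontrivial partition of $[0,1]$ into closed sets. Your separate clopen argument for the finite case is harmless but redundant, since Sierpi\'nski's theorem already covers any countable partition with at least two pieces; your added detail on the juxtaposition endpoints is a reasonable expansion of what the paper leaves implicit.
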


\begin{proof}
Each spherical equivalence class intersects the path in a
closed set, as we have shown above.  The
interval $[0,1]$ cannot be written as a union of countably many disjoint
closed sets, by Sierpinski's Theorem.  Hence, if there are at least two
inequivalent maps in the homotopy, then there must be uncountably many.
\end{proof}

\begin{corollary}
All four Faran maps from Example 3.1 below
are homotopically inequivalent in target
dimension $3$ through rational maps.
\end{corollary}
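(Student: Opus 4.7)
The plan is to combine Corollary 3.1 with Faran's classification theorem, which asserts that every proper rational map $\mathbf{B}_2 \to \mathbf{B}_3$ is spherically equivalent to exactly one of the four Faran maps. Because Faran's theorem yields only finitely many spherical equivalence classes in target dimension $3$, any rational homotopy between two of these maps would violate the uncountability conclusion of Corollary 3.1.

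More precisely, I would argue by contradiction. Suppose two distinct Faran maps $f$ and $g$ are homotopic through rational maps in target dimension $3$, so there exists a continuous family $H_t \colon \mathbf{B}_2 \to \mathbf{B}_3$ of rational proper maps with $H_0 = f$ and $H_1 = g$. Since $f$ and $g$ are spherically inequivalent representatives of two different Faran classes, Corollary 3.1 applies and produces uncountably many spherically inequivalent maps in the family $\{H_t : t \in [0,1]\}$. But every $H_t$ is a rational proper map $\mathbf{B}_2 \to \mathbf{B}_3$, and by Faran's classification there are only four spherical equivalence classes of such maps. This contradicts the uncountability, so no such homotopy $H_t$ can exist.

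Repeating the argument for each of the $\binom{4}{2}=6$ pairs of Faran maps shows that all four are pairwise homotopically inequivalent through rational maps in target dimension $3$, which is the desired conclusion. There is no serious obstacle here: the content is packaged into Corollary 3.1, and the only external ingredient is Faran's finite classification in the specific dimensional pair $(n,N)=(2,3)$. Note that this also contrasts sharply with Proposition 2.4, which guarantees a rational homotopy exists in target dimension $n + \max(N,K) = 2 + 3 = 5$; the corollary shows that this larger target dimension is genuinely needed, and the obstruction to lowering it is detected purely by the cardinality of spherical equivalence classes available at each target dimension.
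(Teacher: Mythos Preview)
Your argument is correct and is exactly the one given in the paper: combine Faran's theorem (only four spherical equivalence classes of rational proper maps $\mathbf{B}_2 \to \mathbf{B}_3$) with Corollary~3.1 to derive a contradiction from the existence of any rational homotopy between distinct Faran maps. The paper states this in a single sentence, but the logic is identical to yours.
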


\begin{proof}
By Faran's theorem in [Fa], there are only 4 spherical equivalence
classes of rational
maps from ${\bf B}_2$ to ${\bf B}_3$.
\end{proof}

\begin{example} Here are
representatives of the four spherical equivalence classes:

$$ f(z,w) = (z,w,0) \eqno (16.1)$$
$$ g(z,w) = (z^2, zw, w) \eqno (16.2) $$
$$ h(z,w) = (z^2, \sqrt{2} zw, w^2) \eqno (16.3) $$
$$ \phi(z,w) = (z^3, \sqrt{3} zw, w^3). \eqno (16.4) $$

 By Corollary 3.2, none of these maps are homotopy equivalent (through
rational maps) in target dimension
$3$. By Proposition 2.4, all are homotopy equivalent in dimension $5$.
We analyze what happens in dimensions $4$ and $5$.

The maps $f$ and $g$ are homotopic in dimension $4$:

$$ H_t(z,w) = (\sqrt{1-t^2}z, tz^2, t zw, w). $$

The maps $g$ and $h$ are homotopic in dimension $4$:

$$ H_t(z,w) = (z^2, \sqrt{2 - t^2} zw, tw, \sqrt{1-t^2} w^2). \eqno (17)
$$
It follows that $f,g,h$ are members of the same equivalence class in
dimension $4$.
We suspect that $\phi$ is not, a question we posed at AIM in June 2014.
Note that $h$ and $\phi$ are homotopy equivalent in dimension
$5$; here is an explicit homotopy:
$$ H_t(z,w) = (tz^2, tw^2, \sqrt{1-t^2} z^3, \sqrt{1-t^2}w^3,
\sqrt{3-t^2} zw).  $$
Thus, in target dimension $5$ these four maps
lie in the same homotopy class, whereas in target
dimension $3$ they lie in $4$ distinct homotopy classes.
\end{example}

\begin{remark} By Corollary 3.1, the family connecting $f$ and $g$ in Example 3.1 consists
of spherically inequivalent maps.  This result also follows from an old result in [D2]. 
If polynomial proper maps preserve the origin and are spherically equivalent, then they must
be unitarily equivalent. It is easy to check that unitary equivalence fails 
for the maps in Example 3.1. \end{remark}

\section{Rational proper mappings between balls}

Example 2.1 reveals that the degree of a rational proper mapping between 
balls
is {\bf not} a homotopy invariant. It is natural to further investigate
the rational case, because of the following theorem
of Forstneric [F1]. One of the key tools in the proof is a variety
called the $X$-variety of $f$.
The method of proof involves the Schwarz reflection principle. The
variety $X_f$ contains the graph of $f$,
and Forstneric showed that this variety is rational. Assuming the map
is rational, the first author developed in [D1]
an efficient method for computing this variety, which we will recall
and use in this section.
The method allows us to understand how this variety depends on $t$
when $H_t$ is a family of rational proper mappings.

\begin{theorem}[Forstneric 1989] Assume $n\ge 2$.
Suppose $f:{\bf B}_n \to {\bf B}_N$ is proper, holomorphic,
and smooth up to the boundary. Then $f$ is a rational mapping. \end{theorem}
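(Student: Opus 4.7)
The plan is to follow Forstneric's strategy of associating to $f$ a complex analytic variety $X_f \subset {\bf C}^n \times {\bf C}^N$, showing that $X_f$ is algebraic, and then recovering rationality of $f$ by projecting $X_f$ onto its graph component. The main tool is the Schwarz reflection principle applied to the CR equations on the sphere, in a form adapted to the smoothness hypothesis.

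First I would polarize the boundary identity. Since $f$ is $C^\infty$ up to $\partial {\bf B}_n$ and satisfies $\|f(z)\|^2 = 1$ on $\|z\|^2 = 1$, introduce the companion map $g(\zeta) = \overline{f(\bar\zeta)}$, which is holomorphic on ${\bf B}_n$. The boundary equation then reads $\langle f(z), g(\bar z) \rangle = 1$ on $\|z\| = 1$. The smoothness, combined with the nondegeneracy of the sphere as a strictly pseudoconvex real-analytic CR hypersurface, permits a reflection argument that yields the holomorphic identity
\[
\sum_{j=1}^{N} f_j(z)\, g_j(\zeta) = 1 \quad \text{on } \{ (z,\zeta) : \langle z,\zeta\rangle = 1 \},
\]
valid in a neighborhood of the real diagonal $\zeta = \bar z$ in the complexified sphere. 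Implementing this step uses that $f$ extends to a CR map on $\partial {\bf B}_n$ and that the Levi form is nondegenerate, which lets one propagate the identity off the totally real diagonal.

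Next I would define $X_f$ to be the graph component of the set of pairs $(z,w) \in {\bf C}^n \times {\bf C}^N$ for which there exists $\zeta$ with $\langle z, \zeta \rangle = 1$ and $\langle w, g(\zeta) \rangle = 1$. By construction $X_f$ is a complex analytic subvariety containing the graph of $f$, and the first projection $X_f \to {\bf B}_n$ is generically finite. The crucial step, and the one I expect to be the main obstacle, is upgrading $X_f$ from complex analytic to algebraic. The strategy is to show that the projective closure of $X_f$ in ${\bf P}^n \times {\bf P}^N$ has finite volume, using compactness of $\partial {\bf B}_n$ and uniform bounds derived from $C^\infty$ smoothness up to the boundary, and then to invoke a Stoll-type algebraization theorem for analytic subvarieties of projective space of finite volume. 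The hypothesis $n \geq 2$ enters here in ensuring that the incidence variety $\{\langle z,\zeta\rangle = 1\}$ is of positive dimension and that the reflection identity imposes enough algebraic constraints to bound the degree of $X_f$.

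Once $X_f$ is known to be algebraic, its graph component defines an algebraic correspondence from ${\bf C}^n$ to ${\bf C}^N$ whose graph coincides with that of $f$ on a dense open subset of ${\bf B}_n$. Since $f$ is holomorphic and single-valued on ${\bf B}_n$, standard arguments extract a rational representation $f = p/q$ agreeing with $f$ on ${\bf B}_n$. This yields the desired rationality and completes the proposed proof.
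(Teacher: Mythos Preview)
The paper does not prove this theorem; it is stated as a known result of Forstneric and cited as [F1], with no proof given. The paper merely remarks that ``One of the key tools in the proof is a variety called the $X$-variety of $f$. The method of proof involves the Schwarz reflection principle.'' So there is no paper-proof to compare against, and your proposal is an attempt to reconstruct Forstneric's original argument rather than anything the present authors do.

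That said, your sketch has a genuine slip. Your definition of $X_f$ reads: pairs $(z,w)$ for which \emph{there exists} $\zeta$ with $\langle z,\zeta\rangle=1$ and $\langle w,g(\zeta)\rangle=1$. With an existential quantifier this set is enormous (for generic $(z,w)$ such a $\zeta$ exists), and it certainly does not cut out anything close to the graph of $f$. The correct definition, which the paper records as Definition 4.1, uses a universal quantifier (an implication): $(w,\zeta)\in X_f$ iff $\langle z,w\rangle=1$ \emph{implies} $\langle f(z),\zeta\rangle=1$. Equivalently, the fiber over $w$ is the affine subspace of $\zeta$ satisfying a linear system indexed by all $z$ on the complexified sphere through $w$. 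This is what makes the fibers small and the variety close to the graph.

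Your algebraization step is also underspecified. Forstneric's argument does not proceed by bounding the volume of a projective closure and invoking a Stoll-type theorem; rather, the reflection identity exhibits $X_f$ as defined fiberwise by a holomorphic family of \emph{linear} equations in $\zeta$ (coming from the polarized relation), and the algebraicity (indeed rationality) is obtained by controlling the degree of this family and applying analytic continuation/Chow-type reasoning adapted to the Segre variety structure. Invoking finite volume is not the mechanism here, and it is unclear how you would verify it from $C^\infty$ data alone. If you want a self-contained account, consult [F1] directly; the present paper treats the theorem as a black box.
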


By a follow-up result of Cima-Suffridge ([CS]),  $f$ extends
holomorphically past the sphere.
Hence, for $n\ge 2$, we can identify rational proper mappings between
balls with holomorphic proper
mappings between balls that extend smoothly to the unit sphere.

We now develop the properties of $X_f$.
Assume $R(z, {\overline z})$ and $r(z,{\overline z})$ are
real-analytic, and $\{r=0\}$ is a hypersurface.
Suppose $R$ vanishes on $\{r=0\}$.
Then $R(z,{\overline w}) = 0$ on the set defined by $r(z,{\overline w})=0$.
This result is known as {\it polarization}.

Let $f:{\bf B}_1 \to {\bf B}_1$ be proper (hence a finite Blaschke product).
By polarization, if we know the value of
$f(z)$ for some $z$ inside the circle, then we automatically know
the value of $f$ at the reflected point ${1 \over {\overline z}} = {z
\over |z|^2}$ from the formula

$$ f({z \over |z|^2}) = {1 \over {\overline {f(z)}}}.  $$

One of the difficulties in homotopy considerations is that
reflection in higher dimensions is much more subtle.
Suppose $n\ge 2$, and that $f:{\bf B}_n \to {\bf B}_N$ is proper and
smooth up to the sphere.
Then $f$ is rational and holomorphic past the sphere. What do we get
from polarization and reflection?

$$ \langle z,w \rangle = 1 \implies \langle f(z),f(w) \rangle = 1. $$
Given $z$ we know
$$ \langle f(z), f({z \over ||z||^2})\rangle = 1, $$
but this equation does not determine the value of $f$ at the reflected point.

\begin{definition} (Forstneric) Suppose $f:{\bf B}_n \to {\bf B}_N$ is proper.
$$ X_f = \{(w, \zeta) \in {\bf C}^n \times {\bf C}^N : \langle z,w
\rangle = 1 \implies \langle f(z),\zeta \rangle = 1.\} \eqno (X) $$
\end{definition}

It is convenient to decree that $(0,f(0)) \in X_f$.
In (X) we could insist
that $w$ lie in the domain of $f$, or we could allow $\infty$.
For us $X_f$ will be
the union of the set defined by (X) with the point $(0,f(0))$,
assuming that $f(w)$ is defined.

Note that $(w,f(w)) \in X_f$ by polarization. In general $X_f$
is a proper superset of the graph. If $N$ is minimal for $f$, then
the fiber over a generic $w$ will be $f(w)$, but in most cases
exceptional fibers
exist. If $N$ is larger than the embedding dimension of $f$, then the fibers
are all positive-dimensional.
Saying that $X_f$ equals the graph of $f$ amounts to saying
$f(w)$ is the unique solution to the polarized equation.

The method in [D1] uses homogenization techniques to create a matrix
${\overline C} (w)$ of holomorphic polynomial functions with the
following properties.
Suppose $f:{\bf B}_n \to {\bf B}_N$ is rational of degree $d$.
The matrix $C(w)$ has $N$ columns.
It has $K(n,d)$ rows, where $K(n,d)$
is the number of homogeneous monomials of degree $d$ in $n$ variables.
Thus we can think of ${\overline C}(w)$ as a linear map from ${\bf
C}^N$  to $H^0({\bf P}_n, \mathcal O(d))$.

\begin{theorem} Let $f={p \over q} :{\bf B}_n \to {\bf B}_N$ be a
proper rational
holomorphic mapping.
Let $C({\overline w})$ denote the linear map from above.
For each nonzero $w$ in the domain of $f$, we have $(w,\zeta) \in X_f$
if and only if $\zeta - f(w)$ lies in the null space
of ${\overline C} (w)$.
Thus $X_f$ equals the graph of $f$ if and only if, for each nonzero
$w$ in the domain of $f$,
the null space of ${\overline C} (w)$ is trivial. 
Furthermore, if $H_t$ is a homotopy of rational mappings,
then the corresponding linear maps $C_t({\overline w})$ depends continuously on $t$. \end{theorem}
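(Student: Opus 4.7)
The plan is to unpack the defining condition for $X_f$ using polarization and then convert it into a linear algebra statement via homogenization. Writing $f={p\over q}$, polarization of the sphere identity $\|f(z)\|^2 = 1$ on $\|z\|^2=1$ gives $\langle f(z),f(w)\rangle = 1$, equivalently $\langle p(z),f(w)\rangle = q(z)$, whenever $\langle z,w\rangle = 1$. In particular $(w,f(w)) \in X_f$, and subtraction shows that for every $\zeta \in {\bf C}^N$,
$$ (w,\zeta) \in X_f \iff \langle p(z), \zeta - f(w) \rangle = 0 \text{ on } \{z : \langle z,w\rangle = 1\}. $$
The theorem thus reduces to deciding when a polynomial of degree at most $d$ in $z$, whose coefficients are linear in $\xi := \zeta - f(w) \in {\bf C}^N$, vanishes on a given affine hyperplane.

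Next I would homogenize. Let $\tilde p(z_0,z) = z_0^d p(z/z_0)$, a homogeneous polynomial of degree $d$ in $n+1$ variables. The affine polynomial $z \mapsto \langle p(z),\xi\rangle$ vanishes on $\{\langle z,w\rangle = 1\}$ if and only if the homogeneous form $\langle \tilde p(z_0,z),\xi\rangle$ vanishes on the linear hyperplane $L_w = \{(z_0,z) : z_0 = \langle z,w\rangle\} \subset {\bf C}^{n+1}$, which is a genuine hyperplane for $w \ne 0$. A homogeneous form of degree $d$ vanishes on $L_w$ precisely when it is divisible by the defining linear form, equivalently when its restriction to $L_w \cong {\bf C}^n$ (a homogeneous form of degree $d$ in $n$ variables) is identically zero. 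I would then define the linear map
$$ C(\overline w) : {\bf C}^N \to H^0({\bf P}_n, {\mathcal O}(d)), \qquad \xi \mapsto \langle \tilde p(z_0,z), \xi \rangle \big|_{L_w}, $$
which is linear in $\xi$, polynomial in $\overline w$, and which agrees with the matrix constructed in [D1]. By the preceding reduction, $C(\overline w)(\zeta - f(w)) = 0$ if and only if $(w,\zeta) \in X_f$, giving the null-space characterization immediately. The second assertion follows by specializing: $X_f$ equals the graph of $f$ if and only if $\zeta = f(w)$ is the only solution for each nonzero $w$ in the domain, i.e., iff $\ker C(\overline w) = \{0\}$.

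For the continuity statement I would invoke Lemma 3.1 to pick representatives $H_t = p_t/q_t$ whose polynomial coefficients vary continuously in $t$. Each entry of $C_t(\overline w)$ is then a fixed polynomial expression in the coefficients of $p_t$ and in $\overline w$, so $(t,w) \mapsto C_t(\overline w)$ is jointly continuous.

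The main obstacle is less conceptual depth than careful bookkeeping. One must keep three related spaces in sync (polynomials in $z$ of degree at most $d$, homogeneous polynomials of degree $d$ in $(z_0,z)$, and their restrictions to $L_w$) and verify that the "vanishing iff divisible" step requires no additional input beyond $L_w$ being a proper hyperplane. This nonvanishing is exactly why the theorem restricts to nonzero $w$; the excluded case $w=0$ is covered by the separate decree $(0,f(0)) \in X_f$ built into the definition of $X_f$.
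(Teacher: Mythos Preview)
Your argument is correct and follows precisely the homogenization method the paper sketches (citing [D1]) immediately before stating the theorem; the paper itself does not supply a separate proof, so your write-up is a faithful reconstruction of that approach. One small caveat on the continuity clause: Lemma~3.1 only guarantees a continuous family $p_t/q_t$ with the same \emph{endpoints} as $H_t$, not necessarily equal to $H_t$ for intermediate $t$, so what you actually obtain is continuity of $C_t(\overline w)$ for some homotopy in the same class rather than for the given $H_t$ itself---this reading, however, is consistent with the paper's informal phrasing and with how the statement is used (e.g., in Corollary~4.2).
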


\begin{corollary} Let $f$ be a rational proper holomorphic mapping
between balls.
For each nonzero $w$ in the domain of $f$, the fiber $X_f(w)$ over $w$ is the
affine space $f(w) + {\rm null \ space} ({\overline C}(w))$.
In particular, the null space of $C(w)$ is trivial if and only if the
fiber over $w$
is zero-dimensional, when it is the single point $f(w)$. \end{corollary}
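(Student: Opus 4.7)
The plan is to deduce both statements directly from Theorem 4.2, which has already done essentially all the work. First I would unpack the definition of the fiber: for a fixed nonzero $w$ in the domain of $f$, the fiber of the canonical projection $X_f \to {\bf C}^n$ over $w$ is
\[
X_f(w) = \{ \zeta \in {\bf C}^N : (w,\zeta) \in X_f \}.
\]

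Next I would apply Theorem 4.2 verbatim. The theorem asserts that, for nonzero $w$ in the domain of $f$, $(w,\zeta) \in X_f$ if and only if $\zeta - f(w)$ lies in the null space of $\overline C(w)$. Translating the condition by the fixed vector $f(w)$, this says $\zeta \in X_f(w)$ exactly when $\zeta \in f(w) + \ker \overline C(w)$. This exhibits the fiber as the affine subspace obtained by translating $\ker \overline C(w)$ by $f(w)$, which is precisely the first claim of the corollary.

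For the ``in particular'' statement, I would observe that the affine space $f(w) + \ker \overline C(w)$ has dimension equal to $\dim \ker \overline C(w)$. It is therefore zero-dimensional if and only if the null space is trivial; and in that case the single element of the fiber must be $f(w)$ itself, which is consistent with the already-noted fact that $(w, f(w)) \in X_f$ by polarization.

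There is essentially no obstacle here: the genuine content sits inside Theorem 4.2, and the corollary is a cosmetic reformulation of the equivalence ``$\zeta - f(w) \in \ker \overline C(w)$'' as the statement that the fiber is an affine translate of $\ker \overline C(w)$. The only mild caveat to watch is the restriction to nonzero $w$ in the domain of $f$; this matches the hypothesis of Theorem 4.2, and separately accounts for the by-hand convention $(0,f(0)) \in X_f$, so nothing additional needs to be verified.
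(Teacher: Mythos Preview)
Your proposal is correct and matches the paper's approach: the paper states this corollary without proof, treating it as an immediate consequence of Theorem~4.2, and your argument simply unpacks that implication by translating the equivalence ``$\zeta - f(w) \in \ker \overline C(w)$'' into the affine-space description of the fiber.
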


\begin{corollary} Let $H_t$ be a homotopy of rational proper maps. If the $X$-variety
of $H_{t_0}$ is the graph of $H_{t_0}$, then the same holds for $t$ near $t_0$.
\end{corollary}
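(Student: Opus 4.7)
The plan is to use Theorem 4.2 to reformulate the hypothesis as an open condition on a continuously varying matrix. By Theorem 4.2, $X_{H_t}$ equals the graph of $H_t$ precisely when the $K(n,d)\times N$ matrix $C_t({\overline w})$ has trivial null space (equivalently, full column rank $N$) for every nonzero $w$ in the domain of $H_t$. By the Cima-Suffridge result, each $H_t$ extends holomorphically past the sphere, so this domain always contains $\overline{{\bf B}_n}$. The final assertion of Theorem 4.2 tells us that the entries of $C_t({\overline w})$ are polynomials in ${\overline w}$ whose coefficients depend continuously on $t$.

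Translating the rank condition into a continuous function, I set
$$ g(t,w) := \sum_\sigma |M_\sigma^t(w)|^2, $$
summing over the $N\times N$ minors $M_\sigma^t(w)$ of $C_t({\overline w})$. Then $g$ is jointly continuous on $[0,1]\times {\bf C}^n$, polynomial in $(w,{\overline w})$ for each fixed $t$, and strictly positive precisely where $C_t({\overline w})$ has full column rank. The hypothesis becomes $g(t_0,w)>0$ on $\overline{{\bf B}_n}\setminus\{0\}$. In the generic case that $g(t_0,0)>0$ as well, the function $g(t_0,\cdot)$ is bounded below by some $\delta>0$ on the compact set $\overline{{\bf B}_n}$, and uniform continuity in $t$ yields an open interval about $t_0$ on which $g(t,\cdot)>\delta/2$ on $\overline{{\bf B}_n}$; Theorem 4.2 then gives the corollary on this interval.

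The main obstacle is the delicate case $g(t_0,0)=0$, where $\overline{{\bf B}_n}\setminus\{0\}$ is no longer compact and the direct compactness argument fails. Here I would exploit the algebraic structure: the zero set $V_{t_0}=g(t_0,\cdot)^{-1}(0)$ is the common zero locus of antiholomorphic polynomials in $w$, hence an antiholomorphic subvariety of ${\bf C}^n$, and the hypothesis $V_{t_0}\cap \overline{{\bf B}_n}\subseteq\{0\}$ forces the origin to be an isolated point of $V_{t_0}$ (since a positive-dimensional component through $0$ would meet $\mathbf{B}_n\setminus\{0\}$). One then splits $\overline{{\bf B}_n}\setminus\{0\}$ into a compact annulus $\{\epsilon\le ||w||\le 1\}$, where the previous compactness argument applies, and a small punctured neighborhood $\{0<||w||<\epsilon\}$, where the local polynomial form of $g(t_0,\cdot)$ at its isolated zero is used to rule out the appearance of new zeros of $g(t,\cdot)$ under sufficiently small perturbations of $t$. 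Combining the two regions and invoking Theorem 4.2 once more yields the conclusion.
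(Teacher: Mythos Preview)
The paper offers no proof of this corollary; it is simply recorded as an immediate consequence of Theorem 4.2 (the continuous dependence of $C_t(\overline w)$ on $t$). Your argument is therefore already more careful than the paper's, and the overall strategy---rewriting ``$X_{H_t}$ equals the graph'' as full column rank of $C_t(\overline w)$, packaging this as positivity of the sum-of-squared-minors function $g(t,w)$, and using joint continuity---is exactly the right one. The case $g(t_0,0)>0$ is handled correctly by compactness.

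The gap is in the ``delicate case'' $g(t_0,0)=0$. You assert that ``the local polynomial form of $g(t_0,\cdot)$ at its isolated zero is used to rule out the appearance of new zeros of $g(t,\cdot)$ under sufficiently small perturbations of $t$,'' but this step is not justified, and for \emph{arbitrary} continuous families of polynomials it is simply false. Writing $v=\overline w$, take $n=2$ and suppose two of the minors are $v_1$ and $v_2$ at $t_0$; their common zero set is $\{0\}$, isolated. Perturbing to $v_1-t$ and $v_2$ produces the common zero $(\,\overline t,0)$, which for small $t\ne 0$ lies in ${\bf B}_n\setminus\{0\}$. Your argument invokes nothing about $C_t(\overline w)$ beyond ``polynomial entries depending continuously on $t$,'' so it cannot distinguish the genuine situation from this toy counterexample. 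A \L ojasiewicz bound $g(t_0,w)\ge c\,\|w\|^\alpha$ near $0$ is available, but you would also need a matching estimate of the form $|g(t,w)-g(t_0,w)|=o(\|w\|^\alpha)$ uniformly as $t\to t_0$, and that does not follow from continuity of coefficients alone. To close the gap you must use structure specific to the homogenized matrix $C_t(\overline w)$---for instance, an argument that the degeneracy locus is forced through $0$ for all $t$, or a homogeneity/weight property of the minors making the locus conical---rather than treat the minors as generic polynomials.

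A smaller point: Theorem 4.2 requires trivial null space for every nonzero $w$ in the \emph{domain of $H_t$}, which for a rational map may strictly contain $\overline{{\bf B}_n}$. Your compactness argument only controls $\overline{{\bf B}_n}$, so even in the easy case the conclusion you obtain is a priori weaker than the stated corollary.
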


\begin{example} Let $f:{\bf B}_2 \to {\bf B}_4$ be the group-invariant map
$$ f(z_1,z_2) = (z_1^5, \sqrt{5} z_1^3 z_2, \sqrt{5} z_1 z_2^2, z_2^5). $$
We compute $X_f$ as follows. Homogenize:

$$ (z_1^5, \sqrt{5} z_1^3 z_2 (z_1 {\overline w}_1 + z_2 {\overline
w}_2), \sqrt{5}
z_1 z_2^2  (z_1 {\overline w}_1 + z_2 {\overline w}_2)^2, z_2^5) \eqno 
(19) $$
From (19) we obtain the matrix $C({\overline w})$.
$$  \begin{pmatrix}   1 &  0 & 0 & 0
\cr 0 &  \sqrt{5} {\overline w_1} & 0 & 0
\cr 0 & \sqrt{5} {\overline w_2} &  \sqrt{5} {\overline w_1}^2 & 0
\cr 0 &  0 & 2 \sqrt{5} {\overline w_1}{\overline w_2} & 0
\cr 0 & 0 &  \sqrt{5} {\overline w_2}^2  & 0
\cr 0 & 0 & 0 & 1  \end{pmatrix} \eqno (20) $$
\end{example}

The degree of a rational proper map between balls equals the degree
of its numerator.  The map $C(w)$ is independent of $w$ if and only if
$f$ is homogeneous.

Suppose $f_t$ is a family, all of degree at most $d$, and some $f_t$
is of degree $d$.
If all $f_t$ have embedding dimension $N$, then $C$ is of size $K$ by $N$.

We can recover $f$ from $C$. Let $E_1,...,E_N$ be the usual basis for
${\bf C}^N$.
Then $C({\overline w})(E_k)$ is the $k$-th component of the numerator
of $f$, homogenized by writing $1 = \langle z, w \rangle$.
Then we can dehomogenize.

\begin{example} We recall Example 2.1. Put $t = {\rm cos}(\theta)= c$
and $s = {\rm sin}(\theta)$.
$$ H_t(z_1,z_2) = $$
$$ (cz_1 - sz_2^2, z_1 z_2, (cz_1-sz_2^2) (sz_1+cz_2^2),
z_1z_2(sz_1+cz_2^2), (sz_1+cz_2^2)^2). \eqno (21) $$
For each $t$, the map  $H_t$ has embedding dimension $5$.
When $t=1$, the degree drops, and hence the degree is not a
homotopy-invariant. \end{example}

To clarify this example, we compute the $X$-variety for the maps $H_t$.
Here is the matrix ${\overline C}(w)$:

$$  \begin{pmatrix}   c w_1^3  &  0 &  c s w_1^2 & 0 & s^2 w_1^2
\cr 3c w_1^2 w_2 & w_1^2  & 2 c s w_1 w_2  & s w_1 & 2 s^2 w_1 w_2
\cr 3c w_1 w_2^2 - s w_1^2  & 2 w_1 w_2 & cs w_2^2 + (c^2-s^2)w_1 &
sw_2 & 2 sc w_1 + s^2 w_2^2
\cr c w_2^3 - 2s w_1 w_2  &  w_2^2  & (c^2 - s^2) w_2  & c & 2sc w_2
\cr - s w_2^2 & 0 &  - s c & 0 & c^2  \end{pmatrix} $$

The determinant of this matrix is $c^2 w_1^6$. Thus, unless $c=0$, it
is generically invertible.
When $c \ne 0$, there is an exceptional fiber when $w_1 = 0$.
When $c=0$ we have a map of degree $3$; hence the rank cannot exceed $4$.

This example illustrates a general  method for constructing homotopies, which we 
elaborate in the last section.

\section{Whitney sequences}

Let $f:{\bf B}_n \to {\bf B}_N$ be a proper rational mapping. Let $A$
be a subspace of ${\bf C}^N$, and let $\pi_A$ denote orthogonal
projection onto $A$.
Following the first author's approach from [D], we may form the
new proper mapping $E_A (f)$, defined by

$$ E_A(f) = (\pi_A f \otimes z) \oplus (1- \pi_A)(f). $$

Suppose that $B$ is another subspace of ${\bf C}^N$ of the same
dimension $d$ as $A$, and $A \cap B = \{0\}$.
 Then there is a unitary mapping $U \in U(N)$ such that
$U(A) = B$. Since the unitary group is path connected, we can find a
one-parameter family
of unitary mappings connecting $U$ to the identity.
It follows that the maps $E_A(f)$ and $E_B(f)$ are homotopic in
dimension $K$, where $K =N + d(n-1)$. Example 2.1 is obtained via this
construction.

\begin{definition} A {\it Whitney sequence} is a collection $F_0, F_1,
...$ of rational proper maps from ${\bf B}_n$ to ${\bf B}_{N_k}$
defined as follows.
Put $F_0(z)=\phi_0$, where $\phi_0$ is an automorphism of ${\bf B}_n$.
Given $F_k : {\bf B}_n \to {\bf B}_{N_k}$,
let $A_k$ be a non-zero subspace of ${\bf C}^{n_k}$, and let $\pi_k$
denote orthogonal projection onto $A_k$.
Choose an automorphism $\phi_k$ of ${\bf B}_n$.
Choose a linear, norm-preserving 
injection $j_k$ to whatever target dimension we wish.
Define $F_{k+1}(z)$ by
$$ F_{k+1} = 
j_k \circ \big( (\pi_k F_k \otimes \phi_k) \oplus (1 - \pi_k) F_k \big). \eqno
(22) $$ \end{definition}

The degree of the rational function $F_k$ is at most $k+1$, but it can
be smaller. The following result
provides an analogue of the one-dimensional situation.

\begin{theorem} Let $\{F_k\}$ denote a Whitney sequence of proper
mappings. Each $F_k$ is homotopic to a monomial
proper mapping of degree $k+1$.
\end{theorem}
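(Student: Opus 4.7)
My plan is induction on $k$. The base case $k=0$ is immediate: $F_0 = \phi_0$ is an automorphism of ${\bf B}_n$, hence homotopic to the identity $z \mapsto z$ by Example 2.2, which is a monomial map of degree $1$.

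For the inductive step, suppose $F_k$ is homotopic to a monomial proper map $M_k$ of degree $k+1$ via a continuous family $G_t$ of proper maps with $G_0 = F_k$ and $G_1 = M_k$, and let $\psi_t$ be a homotopy of automorphisms from $\phi_k$ to the identity (Example 2.2 again). The first homotopy replaces $F_k$ and $\phi_k$ simultaneously: set
$$ H_t = j_k \circ \bigl((\pi_k G_t \otimes \psi_t) \oplus (1-\pi_k)G_t\bigr). $$
The identity $\|u \otimes v\|^2 = \|u\|^2 \|v\|^2$, the fact that $\|G_t\|^2 = \|\psi_t\|^2 = 1$ on the unit sphere, and the norm-preserving nature of $j_k$ together show each $H_t$ is proper, so this is a continuous family of proper maps from $F_{k+1}$ at $t=0$ to $j_k \circ ((\pi_k M_k \otimes z) \oplus (1-\pi_k)M_k)$ at $t=1$.

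Two further deformations will finish the argument. I choose a coordinate subspace $B \subset {\bf C}^{N_k}$ of the same dimension as $A_k$ that contains a standard basis vector $e_j$ for which the $j$-th component of $M_k$ is a monomial of degree exactly $k+1$; pick a unitary path $U_s$ from the identity to some $U$ with $U(A_k) = B$; and conjugate to obtain projections $\pi_s = U_s \pi_k U_s^{-1}$ interpolating from $\pi_k$ to $\pi_B$. The family
$$ K_s = j_k \circ \bigl((\pi_s M_k \otimes z) \oplus (1-\pi_s)M_k\bigr) $$
is proper by the same sphere computation. Next, using path-connectedness of the Stiefel manifold of linear isometries, I would deform $j_k$ through isometries to the standard zero-padding injection. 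Concatenating the three homotopies produces a homotopy from $F_{k+1}$ to the zero-padding of $(\pi_B M_k \otimes z) \oplus (1-\pi_B)M_k$, which is a monomial map because $B$ is coordinate and the components of $M_k$ are monomials; by the choice of $B$ the components $(M_k)_j z_1, \ldots, (M_k)_j z_n$ appear and each has degree $k+2$, while the remaining components have degree at most $k+1$, so the endpoint has degree exactly $k+2$.

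The main obstacle will be the degree count. The given projection $\pi_k$ need not interact well with the monomial structure of $M_k$, so the inductive step hinges on the freedom to deform $\pi_k$ by unitary conjugation to a coordinate projection that picks up at least one top-degree coordinate of $M_k$; this is always possible because $\dim A_k \ge 1$ and $M_k$ has a component of degree $k+1$. Once that choice is available, verifying properness at every stage reduces to the identity $\|u \otimes v\|^2 = \|u\|^2 \|v\|^2$ together with path-connectedness of $U(n)$ and of the relevant Stiefel manifold of isometries.
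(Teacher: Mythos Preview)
Your proof is correct and follows the same inductive strategy as the paper: deform $\phi_k$ to the identity and $F_k$ to a monomial map, then use path-connectedness of the unitary group to guarantee that the tensor-product step raises the degree to $k+2$. Where the paper applies a unitary to $G_k$ so that $\pi_k UG_k$ has full degree, you equivalently conjugate the projection to a coordinate projection $\pi_B$ (the two moves are related by $\pi_k U = U\,\pi_{U^{-1}A_k}$), which has the added benefit of making the endpoint visibly monomial; you also explicitly handle the deformation of the isometric injection $j_k$, a detail the paper's proof leaves implicit.
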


\begin{proof} The idea of the proof comes from Proposition 2.1. We
proceed by induction on the number of factors.
When $k=0$, the function $F_0$ is an automorphism, and hence homotopic
to the identity map (a monomial map of degree $1$) by Example 2.2.
Suppose for some $k$ that $F_k$ is homotopic to a monomial mapping
$G_k$ of degree $k+1$. Find a homotopy connecting
$\phi_k$ to the identity. Then $F_{k+1}$ is homotopic to the mapping

$$ G_{k+1} = (\pi_k G_k \otimes z) \oplus (1 - \pi_k) G_k. \eqno (23) $$

Note that $G_{k+1}$ if of degree $k+2$ if $\pi_k G_k$ is of degree
$k+1$. By the induction hypothesis
$G_k$ is of degree $k+1$. Since $A_k$ is not the trivial subspace, there is
a unitary map such that $\pi_k UG_k$ is also of degree $k+1$. Also
$(1- \pi_k)G_k$ has degree at most $k+1$.
Since the unitary group is path-connected, $UG_k$ is homotopic to
$G_k$. Hence $G_{k+1}$ is homotopic to a monomial mapping of degree
$k+2$.
\end{proof}

The mappings $H_t$ in Example 2.1 are each part of a Whitney sequence.
The degree is not a homotopy invariant
because the tensor products are taken on different subspaces, and
hence the tensor product need not increase the degree.

Not every proper rational mapping is a term of a Whitney sequence. For
example, even the monomial map
$(z,w) \to (z^3, \sqrt{3} zw, w^3)$ cannot be obtained in this fashion.
One must allow also the inverse operation of replacing $F_{k+1}$ in
(22) with $F_k$.

The following result indicates the significance of the target dimension
in the definition of homotopy.

\begin{theorem} Let $F_k:{\bf B}_n \to {\bf B}_N$ be a term in a Whitney sequence.
Then, $F_k \oplus 0$ is homotopic in target  dimension $N+1$ to the 
injection $z \to z \oplus 0$. 
\end{theorem}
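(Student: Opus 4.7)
The plan is to induct on $k$. The base case is direct: $F_0 = \phi_0$ is an automorphism of ${\bf B}_n$, hence homotopic to the identity by Example 2.2, so $F_0 \oplus 0$ is homotopic to $z \oplus 0$ in target dimension $n+1$.

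The heart of the inductive step is the following lemma I would first prove: if $f : {\bf B}_n \to {\bf B}_M$ is proper holomorphic and $A \subset {\bf C}^M$ is any one-dimensional subspace, then $f \oplus 0$ and $E_A(f) \oplus 0$ are homotopic in target dimension $M + n = \dim E_A(f) + 1$. After choosing coordinates so that $A$ is spanned by the first basis vector and writing $f = (f_1, \ldots, f_M)$, the explicit family
$$ H_\theta(z) = \big( \sin(\theta) f_1 z_1, \ldots, \sin(\theta) f_1 z_n, \cos(\theta) f_1, f_2, \ldots, f_M \big) $$
for $\theta \in [0, \pi/2]$ would do the job. One checks that
$$ ||H_\theta||^2 = |f_1|^2 \big( \sin^2(\theta) ||z||^2 + \cos^2(\theta) \big) + \sum_{i \geq 2} |f_i|^2, $$
which on the unit sphere equals $||f||^2 = 1$, so each $H_\theta$ is proper. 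At $\theta = 0$ the map agrees with $f \oplus 0$ (up to a unitary permutation of coordinates), and at $\theta = \pi/2$ with $E_A(f) \oplus 0$.

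To leverage this lemma in the inductive step, write $A_k = {\rm span}(v_1, \ldots, v_{d_k})$ for an orthonormal basis. A short bookkeeping argument shows that $E_{v_{d_k}} \circ \cdots \circ E_{v_1}(F_k) = E_{A_k}(F_k)$ up to a unitary rearrangement of target coordinates: at each stage $j+1$, the next projection direction $v_{j+1}$ sits naturally inside the $(1 - \pi_{{\rm span}(v_1,\ldots,v_j)})F_k$ block. Chaining $d_k$ applications of the lemma, padding with zeros at each transition, yields a homotopy from $F_k \oplus 0$ to $E_{A_k}(F_k) \oplus 0$ in dimension $N_k + d_k(n-1) + 1$, which is at most $N_{k+1} + 1$. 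Combining this with Example 2.2 (to deform $\phi_k$ into the identity in the Whitney formula (22)) and with the observation that $j_k$ is a norm-preserving linear injection (absorbable by a target unitary into extra zero-padding), we obtain a homotopy from $F_{k+1} \oplus 0$ to $F_k \oplus 0$ in dimension $N_{k+1} + 1$. The induction hypothesis, padded with the appropriate number of extra zeros, then supplies the homotopy to $z \oplus 0$.

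The main obstacle will be the careful bookkeeping of target dimensions throughout the chain: one has to verify that the successive lemma applications, unitary rearrangements, and zero-paddings all fit simultaneously inside the single extra dimension allowed by the statement. The underlying geometric content is nonetheless elementary, resting on the identity $\sin^2(\theta) ||z||^2 + \cos^2(\theta) = 1$ on the unit sphere, which explains precisely why one extra target dimension suffices to absorb a rank-one tensor operation $f_1 \mapsto f_1 \otimes z$.
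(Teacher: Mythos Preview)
Your argument is correct and rests on the same core identity as the paper's proof---the one-parameter family $(\sin\theta\, f_1 z_1,\ldots,\sin\theta\, f_1 z_n,\cos\theta\, f_1)$ that undoes a rank-one tensor at the cost of exactly one extra target slot---but the two proofs are organized quite differently. The paper first invokes Theorem~5.1 to replace $F_k$ by a monomial map in the image of the tensor construction, and then works degree-by-degree, using a lexicographic ordering to locate a block $(z_1q,\ldots,z_nq)$ of top-degree monomials and collapsing it to $q$; this is iterated until the degree drops to $1$. Your route bypasses the monomial reduction entirely: you induct directly on the Whitney index $k$, decompose $A_k$ into one-dimensional pieces, and undo the very rank-one tensors that built $F_{k+1}$ from $F_k$, then deform $\phi_k$ to the identity and absorb $j_k$ by a target unitary. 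This is more direct and self-contained (no appeal to Theorem~5.1, no need to argue that the intermediate monomial maps remain in the tensor image), and the dimension bookkeeping---each undo uses one extra slot, which is recovered when the next undo frees $n-1$ slots---is transparent. The paper's route, on the other hand, ties the result to the monomial framework used elsewhere in Section~5 and makes the degree-lowering mechanism explicit. Your decomposition $E_{A_k}=E_{v_{d_k}}\circ\cdots\circ E_{v_1}$ (with each $v_{j+1}$ sitting in the untouched $(1-\pi_{{\rm span}(v_1,\ldots,v_j)})F_k$ block) and the chain of homotopies in dimension $N_k+d_k(n-1)+1\le N_{k+1}+1$ are both sound.
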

\begin{proof}
We have already shown that $F_k$ is homotopy equivalent to a monomial mapping in target
dimension $N$. Furthermore this monomial mapping is in the image of the tensor product
construction. We claim that such monomial maps are always homotopically equivalent
to the identity in target dimension $N+1$. This conclusion is 
trivial for maps of degree $1$. For $d\ge 2$, consider an arbitrary 
monomial mapping $f$ of degree $d$, of embedding dimension $N$, and in the 
image of the tensor product 
operation. We will show that it is homotopic in target dimension $N+1$
to a monomial mapping of degree at most $d-1$, of embedding dimension at 
most $N$, and also in the image of the tensor product mapping.
 To do so, we order the monomials of degree $d$ lexicographically.
Choose the last monomial $m$ that occurs. Then there is a monomial $q$ of 
degree $d-1$ such that the $n$ monomials $z_1 q, \dots, z_n q$ include 
$m$. After renumbering we may assume that $m=z_nq$. For some polynomial 
map $g$ of degree at most $d$ we can write   
$$f= g  \oplus z_1q \oplus z_2 q \oplus \dots \oplus  z_n q. \eqno (24) $$
Now we replace the last $n$ of these components with $\lambda$ times 
each,
and we add an $(N+1)$-st component $\sqrt{1-\lambda^2} q$. For each 
$\lambda \in [0,1]$ the result is  a proper map to the $N+1$ ball. When 
$\lambda=0$ we obtain a map whose last component is $q$ and for which $m$
no longer appears. We continue in 
this fashion one monomial (of degree $d$) at a time, obtaining homotopies 
in target dimension $N+1$ that (in finitely many steps) eliminate all 
terms of degree $d$. Since homotopy is an equivalence relation, the 
composition defines 
a homotopy $H_t$  in target dimension $N+1$. Since $f$ 
is a term of a Whitney sequence, we are in the same 
situation as before, with $d$ lowered. Eventually we reach
a linear map and the result follows.  \end{proof}

More information on the tensor product operation appears, for example,
in [D] and [D3].

\section{bibliography}

[CS] J. A. Cima and T. J. Suffridge, Boundary behavior of rational
proper maps. {\it Duke Math. J.} 60 (1990), no. 1, 135-138

\medskip

[D] J. P. D'Angelo,  Several Complex Variables and the Geometry of
Real Hypersurfaces,
CRC Press, Boca Raton, Fla., 1992.

\medskip

[D1] J. P. D'Angelo, Homogenization, reflection, and the $X$-variety,
{\it Indiana Univ. Math J.} 52 (2003),
1113-1134.

\medskip

[D2] J. P. D'Angelo,
Proper holomorphic maps between balls of different dimensions.
{\it Michigan Math. J.} 35 (1988), no. 1, 83-90.

\medskip

[D3] J. D'Angelo, Proper holomorphic mappings,
positivity conditions, and isometric imbedding, {\it J. Korean Math
Society}, May 2003, 1-30.

\medskip

[DL1] J. P. D'Angelo and Jiri Lebl, On the complexity of proper
mappings between balls, {\it Complex Variables and Elliptic
Equations},
Volume 54, Issue 3, Holomorphic Mappings (2009), 187-204.

\medskip
[DL2] J. P.  D'Angelo  and J. Lebl, Complexity results for CR mappings
between spheres,
{\it Int. J. of Math.}, Vol. 20, No. 2 (2009),  149-166.

\medskip
[Fa]  James J. Faran, Maps from the two-ball to the 
three-ball, Invent. Math. 68 (1982), no. 3, 441-475.

\medskip
[F1] F. Forstneric,
Extending proper holomorphic maps of positive codimension,
{\it Inventiones Math.}, 95(1989), 31-62.

\medskip

[F2] F. Forstneric, Proper rational maps: A survey, Pp 297-363 in
{\it Several Complex Variables: Proceedings of the Mittag-Leffler
Institute, 1987-1988},  Mathematical Notes 38, Princeton Univ.
Press, Princeton, 1993.

\medskip
[H] Huang, X., On a linearity problem for proper maps between balls in
complex spaces
of different dimensions, {\it J. Diff. Geometry} 51 (1999), no 1, 13-33.

\medskip
[HJ] Huang, X., and Ji, S.,
Mapping ${\bf B}_n$ into ${\bf B}_{2n-1}$, {\it Invent. Math.} 145
(2001), 219-250. 

\medskip
[JZ] Shan Yu Ji and Yuan Zhang,
Classification of rational holomorphic maps from $B^2$ into $B^N$ with
degree $2$.
{\it Sci. China Ser. A } 52 (2009), 2647-2667.

\medskip

[L] J. Lebl, Normal forms, Hermitian operators, and CR maps of spheres
and hyperquadrics,
{\it  Michigan Math. J.} 60 (2011), no. 3, 603-628.

\medskip

[LP] J. Lebl and Peters, H., Polynomials constant on a hyperplane and
CR maps of spheres, {\it Illinois J. Math.} 56 (2012), no. 1, 155-175.

\medskip

[R] M. Reiter, Holomorphic Mappings of Hyperquadrics from ${\bf C}^2$ 
to ${\bf C}^3$, PhD dissertation, Univ. of Vienna, 2014.

\end{document}